\definecolor{mygray}{RGB}{201,201,201}
\newtheorem{thm}{Theorem}[section]
\newtheorem{prop}[thm]{Proposition}
\newtheorem{lemma}[thm]{Lemma}
\newtheorem{remark}[thm]{Remark}
\DeclareMathOperator*{\dist}{dist}
\def\laplace{\Delta_x}
\def\uone{u_1^{\varepsilon}}
\def\uzero{u_0^{\varepsilon}}
\def\vone{v_1^{\varepsilon}}
\def\vzero{v_0^{\varepsilon}}
\def\normalder{\partial_{\nu}}
\def\normalderone{\partial_{\nu_1}}
\def\normalderzero{\partial_{\nu_0}}
\def\ballzero{\Omega_0}
\def\ballone{\Omega_1}
\def\boundone{\Gamma_1}
\def\boundzero{\Gamma_0}
\def\eigenv{\lambda^{\varepsilon}}
\def\grad{\nabla_x}
\def\newspace{\mathcal{H}_{\varepsilon}}
\def\visikop{\mathcal{K}_{\varepsilon}}
\def\almosteigenfun{\mathfrak{U}^{\varepsilon}}
\def\almosteigenval{\mathfrak{k}^{\varepsilon}}
\def\approxeigenv{\mathfrak{k}^{\varepsilon}}
\def\visikvec{\mathfrak{a}^{\varepsilon}}
\def\funcnewop{\mathfrak{u}^{\varepsilon}}
\def\column{\mathfrak{a}^{p\varepsilon}}
\def\columnq{\mathfrak{a}^{q\varepsilon}}
\def\cusp{\mathcal{O}}
\def\firstterm{\mathcal{U}_1(x_1,\eta)}
\def\remainder{\tilde{u}}
\def\U{\mathcal{U}}
\def\fu{\mathscr{U}}
\def\su{\mathscr{U}^\prime}
\title{The stiff Neumann problem: asymptotic specialty and \textquotedblleft kissing\textquotedblright domains}
\author{V. Chiad\`o Piat\thanks{Dipartimento di Scienze Matematiche, Politecnico di Torino
 Corso Duca degli Abruzzi 24,
10129 Torino, Italy {\tt e-mail:valeria.chiadopiat@polito.it, lorenza.delia@polito.it}}, L. D'Elia \footnotemark[1]
 \ and S.A. Nazarov \footnote{St. Petersburg State University, Universitetskaya nab., 7-9, St. Petersburg, 199034, Russia\hspace{0.2cm} \ and \hspace{0.2cm}
 Institute of Problems Mechanical Engineering RAS, V.O., Bolshoj pr., 61, St. Petersburg, 199178, Russia {\tt e-mail: srgnazarov@yahoo.co.uk}}
 }
\date{}
\begin{document}

\maketitle

\abstract
{We study the stiff spectral Neumann problem for the Laplace operator in a smooth bounded domain $\Omega\subset\mathbb{R}^d$ which is divided into two subdomains: an annulus $\ballone$ and a core $\ballzero$. The density and the stiffness constants are of order $\varepsilon^{-2m}$ and $\varepsilon^{-1}$  in $\ballzero$, while they are of order $1$ in $\ballone$. Here $m\in\mathbb{R}$ is fixed and $\varepsilon>0$ is small. We provide  asymptotics for the eigenvalues and the corresponding eigenfunctions as $\varepsilon \to 0$ for any $m$. 
In dimension $2$ the case when $\ballzero$ touches the exterior boudary $\partial\Omega$ and $\ballone$ gets two cusps at a point $\cusp$ is included into consideration. The possibility to apply the same asymptotic procedure as in the \textquotedblleft smooth\textquotedblright\, case is based on the structure of eigenfunctions in the vicinity of the irregular part. The full asymptotic series as $x\to\cusp$ for solutions of the mixed boundary value problem for the Laplace operator in the cuspidal domain is given.


\section{Introduction}
Let $\Omega$ be a smooth bounded domain in $\mathbb{R}^d$  and let $\ballone$ and $\ballzero$ be two bounded domains in $\mathbb{R}^d$ with smooth boundaries $\boundone$ and $\boundzero$ respectively such that $\partial\Omega = \boundone$, $\overline{\Omega}_0\subset\Omega$ and $\Omega = \ballzero \cup \ballone\cup \boundzero$. We refer to $\ballone$ as the annulus and $\ballzero$ as the core. A typical geometrical situation is drawn in Fig. \ref{fig:domain}, where the annulus is shaded.  We consider the spectral Neumann problem in $\Omega_1\cup\Omega_0$ with natural transmission conditions for a second order differential operator with piecewise constant coefficients
     \begin{align}
      -\laplace\uone(x) &= \eigenv\uone(x), \hspace{2.5cm} x\in\ballone, \label{uone}\\
      -\varepsilon^{-1}\laplace\uzero(x) &= \eigenv\varepsilon^{-2m}\uzero(x), \hspace{1.7cm} x\in\ballzero, \label{uzero}\\
      \normalderone\uone(x) &=0, \hspace{3.4cm} x\in \boundone, \label{Neumann}\\
      \uzero(x) = \uone(x), \qquad &  \varepsilon^{-1}\normalderzero\uzero(x) = \normalderzero\uone(x), \hspace{0.6cm} x\in \boundzero, \label{bduzero1}
     \end{align}  
where $\normalderone$ and $\normalderzero$ denote the derivatives along outward and inward normal vectors $\nu_1$ and $\nu_0$ to  $\boundone$ and $\boundzero$ respectively, $\eigenv$ is the spectral parameter and $-2m\in\mathbb{R}$ a fixed exponent. From a physical point of view, the factor $\varepsilon^{-2m}$ reflects the dead-weight of the material, i.e. increasing $m$ makes the material heavier. In the first part of the paper, we discuss the asymptotic behavior as $\varepsilon\to 0$ of the eigenpairs $(\eigenv,u^{\varepsilon})$ of problem \eqref{uone}-\eqref{bduzero1}. We identify the (real) eigenfunctions $u^{\varepsilon}$ with the pairs of functions $\{\uzero, \uone\}$, where $u^{\varepsilon}_i$ stands for the restriction of $u^{\varepsilon}$ to $\Omega_i$, $i=0,1$.\\ 
Depending on the orders of the relative density, namely $\varepsilon^{-2m}$, we predict a different asymptotic behaviour of the eigenpairs $(\eigenv, u^{\varepsilon})$, as $\varepsilon\to 0$, putting a special emphasis on the case $0<m<1/2$.  We also characterize the eigenpairs of all the limit spectral problems which have a discrete spectrum. 
Both for $m\in (0,1/2)$ and $m\leq 0$, the limit problem is described by an eigenvalue problem for the Laplace operator in the annulus $\ballone$ with mixed boundary conditions, while in $\ballzero$ the zero-order term of asymptotics of eigenfunction $\uzero$ is a solution of the Laplace equation with Neumann condition. For $m>1/2$ an eigenvalue problem for the Laplace operator posed in the core $\ballzero$ characterizes the limit problem for $\varepsilon=0$, while the leading term  in the annulus $\ballone$ is a harmonic function. The case $m=1/2$ is investigated in the book \cite{SHSP89} in more general setting. Here we give an independent proof for the  reader's convenience. In this case the stiffness and the density constants are of the same order, i.e. $\varepsilon^{-1}$, in the equation \eqref{uzero}; two limit problems appear: the  spectral Neumann problem for the Laplace operator in $\ballzero$ and the spectral problem with the mixed Dirichlet-Neumann boundary conditions in $\ballone$. We derive estimates of convergerce rates in different situations.\\  
The spectral problems \eqref{uone}-\eqref{bduzero1} are of interest in many area of physiscs. For instance, they are considered in the study of reinforcement and elasticity problems (cf. \cite{AN00,AN99,AN96,P80}). In \cite{LNP05}, estimates of convergence rates of the spectrum of stiff elasticity problems are obtained. We also mention the papers \cite{GLNP06, GNP11}, where the authors deal with the asymptotics of spectral stiff problem in domains surrounded by a thin band depending on $\varepsilon$. For a study of asymptotics for vibrating systems containing a stiff region independent of the small parameter $\varepsilon$, we refer to Sections V.$7$-V.$10$ in \cite{SHSP89} and the papers \cite{GNEP19, LNP03, P03}. The problem considered in this paper arises also in the study of different properties of porous media. They are particularly treated in the homogenization theory (cf. \cite{ADH90,BCP04, BCP15, BCS16, P91}). In the context of second order differential operator with double periodic coefficients, we also mention \cite{BCNT17, BP18, HL00,HP03,Z05}, where the authors investigate how to give rise to  spectral gaps in the essential spectrum. \\
In the last part of the paper, we handle the same stiff problem \eqref{uone}-\eqref{bduzero1} but with a geometry of the domain $\Omega$, which differs from that drawn in Fig \ref{fig:domain}. A irregular point appears on the boundary $\partial\Omega$, consisting of the point $\cusp$ of tangency  of the two \textquotedblleft kissing\textquotedblright\;  disks $\ballzero$ and $\ballone$ in $\mathbb{R}^2$ (see Fig.\ref{fig:kissingdomains}). The main feature is that the ans\"{a}tze obtained when the boundary $\partial\Omega$ is smooth are still valid. It is worth to mention that in a certain sense the problem \eqref{uone}-\eqref{bduzero1} can be reduced to a regular perturbation in an operator setting depending on the exponent $m$. In this way the full asymptotic series for eigenpairs of the problem can be readily derived in the \textquotedblleft smooth\textquotedblright\, case after constructing the main asymptotic and first correction terms. However, in the case of \textquotedblleft kissing\textquotedblright\, domains the perturbation analysis becomes much more involved because of possible singularities of solutions at the irregular point $\cusp$. We succeed to prove that these singularities do not interest our asymptotic procedure in the Neumann stiff problem and explain why it does not work directly for the Dirichlet stiff problem, namely when the condiction \eqref{Neumann} is replaced by $\uone(x)=0$, $x\in\boundone$. Further investigation of Dirichlet stiff problem are left as open questions to be considered. We provide the asymptotic expansion as $x\to\cusp$ of the eigenfunction of the Laplace operator along with Neumann boundary condition on the exterior boundary $\boundone$ and a constant trace on the interior boundary $\boundzero$. The ansatz is made of particular functions depending on the geometry of the domain and the boundary conditions. Moreover, we show that all eigenfunctions decay exponentially as $x\to\cusp$ when we set a homogeneous Dirichlet condition on interior boundary $\boundzero$.



\noindent We mention the paper \cite{NT18} in which the authors investigate the asymptotic behaviour of the eigenfunctions of Laplace operator along with Neumann boundary conditions in a bounded domain with a cuspidal point (cf \cite{N94, NST09, NT11}). The paper \cite{D96} discusses the regularity in the space of infinitely smooth functions in the case of cuspidal edges and the paper \cite{MNP82} investigates the regularity of solution of bi-harmonic operator in domains with cusps. We refer to the monographs \cite{KMR97, NP94} for a detailed study of elliptic boundary problems in domains with other type of singularities.



\noindent The paper is organized as follows. In Section $2$ we introduce the weak formulation of the problem \eqref{uone}-\eqref{bduzero1}. We deduce the formal asymptotic expansions for the eigenelements in the most interesting case $m\in(0,1/2)$, whose leading terms are determined by the constant $c_0$ (see \eqref{constant}) in $\ballzero$ and via Neumann spectral problem for Laplacian in $\ballone$. We also discuss briefly the infinite asymptotic series. Section $3$ contains the main result which is formulated in Theorem \ref{thmmain} and the justification of the asymptotics for $m\in (0,1/2)$. In Sections $4-7$ we present the asymptotic expansions of eigenelements for the remaining values of $m$. We introduce the problems which determine the leading and the first-order correction terms and we justify the expansions. In Section $8$ we derive and justify the asymptotic expansion of the eigenfunctions of the Laplace operator in $\ballone\setminus\cusp$ along with the homogenuous Neumann condition on $\boundone$ and the non-homogeneous Dirichlet boundary condition on $\boundzero$. Moreover we discuss some open questions. 


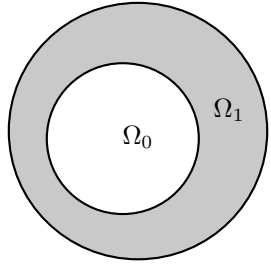
\begin{figure}
\centering
\begin{tikzpicture}
\draw [fill=mygray, thick] (2,0.5)circle(1.7)node[above, xshift=1.2cm]{$\Omega_1$};
\draw  [fill=white, thick](1.8,0.4) circle(1)node[below, xshift=0.2cm, yshift=0.3cm]{$\Omega_0$};
\end{tikzpicture}
\caption{Annulus domain $\Omega_1$ and core domain $\Omega_0$}\label{fig:domain}
\end{figure}
\vspace{0.5cm}

\section{Formal asymptotics in the case $\boldsymbol{0<m<1/2}$}

\subsection{Setting of the problem}
The variational formulation of  problem \eqref{uone}-\eqref{bduzero1} reads: find $\eigenv\in\mathbb{R}$ and $\{\uzero, \uone
\} \in H^1(\Omega)\setminus\{0\}$ satisfying
      \begin{equation}
      \label{weakform}
      (\grad \uone, \grad\varphi_1)_{\ballone} +\varepsilon^{-1}(\grad \uzero, \grad\varphi_0)_{\ballzero} = \eigenv ((\uone,\varphi_1)_{\ballone} + \varepsilon^{-2m}(\uzero,\varphi_0)_{\ballzero}) 
      \end{equation}
$\forall\varphi\in H^1(\Omega)$. Here, $(\cdot,\cdot)_{\Omega_i}$ denotes the natural inner product of Lesbegue space $L^2(\Omega_i)$, $i=0,1$ and $m\in\mathbb{R}$. 
\noindent For each $\varepsilon>0$ the bilinear form on the left-hand side of \eqref{weakform} is positive, symmetric and closed in $H^1(\Omega)$. Due to comptacness of the embeddings $H^1(\Omega_i)\hookrightarrow L^2(\Omega_i), i=0,1$, the problem \eqref{uone}-\eqref{bduzero1} is associated with a self-adjoint operator whose spectrum consists of the monotone increasing unbounded sequence of eigenvalues (cf., for example, \cite[Theorems 10.1.5 and 10.2.2]{BS87})
       \begin{equation}
       \label{spectrum}
       0=\eigenv_1<\eigenv_2\leq\cdots\leq\eigenv_n\leq\cdots\rightarrow\infty
       \end{equation}
repeated according to their multiplicity. The corresponding eigenfunctions $\{ \uzero,\uone\}$ are subject to the orthonormalization conditions
     \begin{equation}
     \label{normalcond}
     ( u^{\varepsilon}_{1,i}, u^{\varepsilon}_{1,j})_{\ballone} + \varepsilon^{-2m}( u^{\varepsilon}_{0,i}, u^{\varepsilon}_{0,j})_{\ballzero}=\delta_{i,j}, \qquad i,j\in\mathbb{N},
      \end{equation} 
where $\delta_{i,j}$ is the Kronecker symbol. The orthonormalization condition \eqref{normalcond} suggests to perform the replacements
     \begin{align}
     \vone(x) = \uone(x),  \hspace{0.3cm}x\in\ballone,\hspace{1.3cm}
     \vzero(x)= \varepsilon^{-m}\uzero(x), \hspace{0.3cm}x\in\ballzero. \label{vzero}
     \end{align}
Hence, $\{\vzero,\vone\}$ satify the orthonormalization condition in $L^2(\Omega)$ which does not depend anymore on $\varepsilon$.
Equations  \eqref{uone}-\eqref{uzero} remain unchanged, while the transmission conditions \eqref{bduzero1} turn into
       \begin{align*}
       \varepsilon^{m}\vzero(x) = \vone (x), \hspace{1cm}
       \varepsilon^{m-1}\normalderzero\vzero(x) = \normalderone\vone(x), \hspace{0.4cm}x\in\boundzero.
       \end{align*}
We look for the asymptotic expansion of eigenfunctions $\{\vzero, \vone\}$ in the form 
    \begin{align}
    \vzero(x) &= \varepsilon^{m}v_{0}^0(x)  + \varepsilon^{1-m} v^\prime_0(x) + \cdots,\hspace{0.5cm} x\in\ballzero,\label{asyu_0}\\
    \vone(x)&= v^{0}_1(x)+\varepsilon^{2m}v^\prime_1(x)+\cdots,\hspace{1.1cm}x\in\ballone\label{asyu_1}.
    \end{align}
We assume that the eigenvalue $\eigenv$ admits the asymptotic ansatz
     \begin{equation}
     \label{asyeigen}
     \eigenv = \lambda^{0} + \varepsilon^{2m}\lambda^\prime+\cdots.
     \end{equation}
By inserting expansions \eqref{asyu_0}, \eqref{asyu_1}, \eqref{asyeigen} in the spectral problem \eqref{uone}-\eqref{bduzero1}, we collect coefficients of the alike powers of $\varepsilon$ and gather boundary value problems for $v^0_0, v^\prime_0$ and $v^0_1, v^\prime_1$.

\subsection{Problem for $\boldsymbol{v_0^0}$ and $\boldsymbol{v^\prime_0}$}
The leading term in \eqref{asyu_0} is a solution of the problem
     \begin{align}
     -\laplace v_0^0(x) =0, \hspace{0.3cm}x\in\ballzero,\hspace{1cm}\normalderzero v_0^0(x) =0, \hspace{0.3cm}x\in\boundzero,\label{pbv_0^0}     
     \end{align}
and hence $v^0_0=c_0$. At this stage, $c_0$ is an arbitrary constant in $\mathbb{R}$. The first-order correction term in \eqref{asyu_0} satisfies the boundary value problem 
        \begin{align}
        -\laplace v^\prime_0(x) = \lambda^0v_0^0(x), \hspace{0.3cm}x\in\ballzero,\hspace{1cm} \normalderzero v^\prime_0 (x)&= \normalderzero v^0_1(x), \hspace{0.3cm}x\in\boundzero. \label{pbv'_0}
        \end{align}
From the compatibility condition for inhomogeneous Neumann problem, we determine the constant $c_0$:
     \begin{equation}
     \label{constant}
     c_0 = \frac{1}{\lambda^0|\ballzero|}\int_{\boundzero} \normalderzero v^0_1 ds_x, 
     \end{equation}
where $|\cdot|$ stands for Lebesgue measure of a set and $\lambda^0 \neq 0$ is an eigenvalue of the problem \eqref{probv_1^0}-\eqref{bcv_1^01}.

\subsection{Problem for $\boldsymbol{v^0_1}$ and $\boldsymbol{v^\prime_1}$}
The leading terms  in \eqref{asyu_1} and \eqref{asyeigen}  verify the spectral mixed boundary value problem 
      \begin{align}
      -\laplace v^0_1(x) &= \lambda^0v^0_1(x),\hspace{0.5cm}x\in \ballone, \label{probv_1^0}\\
      \normalderone v^0_1(x) =0, \hspace{0.3cm}x\in \boundone, &\hspace{1cm}
      v^0_1(x)=0,\hspace{0.3cm}x\in\boundzero. \label{bcv_1^01}
      \end{align}
The variational setting implies the integral identity 
     \begin{equation*}
      (\grad v^0_1, \grad\varphi)_{\ballone} = \lambda^0(v^0_1, \varphi)_{\ballone},\qquad \varphi \in H^1_0(\ballone, \boundzero),
     \end{equation*}
where $H^1_0(\ballone, \boundzero):=\{u\in H^1(\ballone)\hspace{0.01cm}:\hspace{0.01cm} u_{ |\boundzero} =0 \}$.   The spectrum of problem \eqref{probv_1^0}-\eqref{bcv_1^01} is discrete and turns into a monotone unbounded sequences of eigenvalues
      \begin{equation}
      \label{seqeigen}
       0<\lambda^0_1 < \lambda^0_2 \leq \dots \leq \lambda^0_n\leq\cdots \rightarrow+\infty,
      \end{equation}
and the corresponding eigenfunctions $v^0_{1,1}, v^0_{1,2}, \dots $ are subject to the orthonormalization conditions
     \begin{equation}
     \label{orthnormv^0_1}
     (v^0_{1,i}, v^0_{1,j})_{\ballone} =\delta_{i,j}, \qquad i,j\in\mathbb{N}.
     \end{equation}
The correction term in \eqref{asyu_1} is determined by the boundary value problem
      \begin{align}
      -\laplace v^\prime_1(x) - \lambda^0 v^\prime_1(x) &= \lambda^\prime v^0_1(x), \hspace{0.5cm}x\in\ballone,\label{pbv'_1}\\
      \normalderone v^\prime_1(x) =0, \hspace{0.3cm}x\in\boundone,\hspace{1cm}&\hspace{1cm}
      v^\prime_1(x) = v^0_0(x),\hspace{0.3cm} x\in\boundzero \label{bcv'}.
      \end{align}
Since $v^0_0=c_0$ is fixed and defined by \eqref{constant}, the boundary condition \eqref{bcv'} becomes $v^\prime_1(x) = c_0$, $x\in\boundzero$.
The correction term $\lambda^\prime$ is determined through the compatibility condition in the problem \eqref{pbv'_1}-\eqref{bcv'}. First, 
we assume that the eigenvalue $\lambda^0_n\neq 0$ of problem \eqref{probv_1^0}-\eqref{bcv_1^01} is simple. Then the problem \eqref{pbv'_1}-\eqref{bcv'} has a unique solution if and only if   
      \begin{align*}
      \lambda^\prime_n\int_{\ballone}|v^0_{1,n}(x)|^2dx = 
       c_0\int_{\boundzero}\normalderzero v^\prime_{0,n}(x)ds_x
       =-c_0\int_{\ballzero}\laplace v^\prime_{0,n}(x) dx = 
       c^2_0\lambda^0_n|\ballzero|.
      \end{align*}
Thus, the perturbation term in the ansatz \eqref{asyeigen} takes the form
     \begin{equation}
     \label{singleeigenv}
     \lambda^\prime_n = c_0^2\lambda^0_n|\ballzero| = \frac{1}{\lambda^0_n|\ballzero|}\left(\int_{\boundzero} \normalderzero v^0_1 ds_x\right)^2.
     \end{equation}

\subsubsection{Multiple eingenvalues}
In the case $\lambda^0_n\neq 0$ is a multiple eigenvalue with multiplicity $\tau>1$, i.e.
    \begin{equation}
    \label{multipleig}
    \lambda^0_{n-1}< \lambda^0_n  = \lambda^0_{n+1}= \dots = \lambda^0_{n+\tau-1}<\lambda^0_{n+\tau},
    \end{equation} 
the expansions \eqref{asyu_0}-\eqref{asyu_1} are still valid. However we predict that the leading terms of $v^{\varepsilon}_{1,n}, v^{\varepsilon}_{1,n+1}, \dots,$ $ v^{\varepsilon}_{1,n+\tau-1}$ are linear combinations of the eigenfunctions $v^0_{1,n}, v^0_{1,n+1}, \dots, v^0_{1,n+\tau-1}$ of the problem \eqref{probv_1^0}-\eqref{bcv_1^01} associated to eigenvalue $\lambda^0_n$, i.e.
      \begin{equation}
      \label{lincom}
      V_{1,j}^{0}(x) = a_n^jv^0_{1,n}(x)+\dots+a_{n+\tau -1}^jv^{0}_{1,n+\tau-1}(x), \qquad j= n,\dots, n+\tau-1.
     \end{equation}
Furthermore, we require that the columns
    \begin{equation*}
     a^j = (a^j_n, \dots, a^j_{n+\tau-1})^\top\in\mathbb{R}^{\tau}, \qquad j=n,\dots,n+\tau-1,
    \end{equation*} 
satisfy the orthonormalization conditions 
      \begin{equation}
      \label{orthcondeigenv}
      (a^j, a^i) := \sum_{k=n}^{n+\tau-1}a^j_k a^i_k = \delta_{j,i}, \qquad j,i=n,\dots,n+\tau-1.
      \end{equation}
As a consequence, the linear combinations \eqref{lincom} with $j=n,\dots,n+\tau-1$ are a new orthonormal basis in the eigenspace of the eigenvalue $\lambda^0_n$.\\ Bearing in mind the linear combinations \eqref{lincom}, the compatibility conditions in the problem \eqref{pbv'_0} yield  the new constant leading terms $v^0_{0,n}, \dots, v^0_{0,n+\tau-1} $ of the ansatz \eqref{asyu_0}
   \begin{equation}
   \label{c_{0,k}}
   v^0_{0,j} = \frac{1}{\lambda^0_n|\ballzero|}\sum_{k=n}^{n+\tau-1}a^j_k\int_{\boundzero} \normalderzero v^0_{1,k}ds_x, \qquad j=1,\dots,n+\tau-1.
   \end{equation}
     
     
\noindent The correction term $V^\prime_{1,j}$ is determined from the problem 
      \begin{align}
      -\laplace V^\prime_{1,j}(x) -\lambda^0_n V^\prime_{1,j}(x) &= \lambda^\prime_jV^{0}_{1,j}(x), \hspace{0.8cm}x\in\ballone, \label{multiple}\\
      \normalderone V^\prime_{1,j}(x)=0, \hspace{0.3cm}x\in \boundone,\hspace{0.6cm}&\hspace{0.7cm}
      V^\prime_{1,j} (x) = v^0_{0,j}, \hspace{0.3cm}x\in\boundzero.\label{multiple2}
      \end{align}
The Fredholm alternative  leading to the necessary and sufficient condition for $V^\prime_{1,j}, j=n,\dots,$ $ n+\tau-1$ to exist, is given by  
       \begin{equation*}
       \lambda^\prime_j (V^{0}_{1,j}, v^0_{1,p})_{\ballone} =\int_{\boundzero}V^\prime_{1,j} \normalderzero v^0_{1,p}(x) ds_x, \qquad p=n,\dots,n+\tau-1.
      \end{equation*}
Owing to \eqref{c_{0,k}} and  the orthonormalization condition \eqref{orthnormv^0_1}, the above formulas become
        \begin{equation}
        \label{multeig}
        \lambda^\prime_j a^j_p = \sum_{k=n}^{n+\tau-1} a^j_k\frac{1}{\lambda^0_n|\ballzero|}\int_{\boundzero} \normalderzero v^0_{1,k}(x) ds_x \int_{\boundzero}\normalderzero v^0_{1,p}(x) ds_x, \qquad p=n\dots,n+\tau-1.
       \end{equation}
We represent the relations \eqref{multeig} as an algebraic spectral system
     \begin{equation*}
      \mathcal{M}a^j= \lambda^\prime_ja^j, \qquad j=n,\dots, n+\tau-1,
     \end{equation*}
with the matrix $\mathcal{M}$ of size $\tau\times\tau$  defined by 
      \begin{equation*}
      \label{matrix}
      \mathcal{M}_{pk} = \frac{1}{\lambda^0_n|\ballzero|} \int_{\boundzero}\normalderzero v^0_{1,p}(x)ds_x \int_{\boundzero}\normalderzero v^0_{1,k}(x)ds_x, \qquad p,k=n,\dots,n+\tau-1.
      \end{equation*}
It is clear that $\mathcal{M}$ is a symmetric matrix, i.e. $\mathcal{M}_{pk} = \mathcal{M}_{kp} $. Therefore, it has $\tau$ real eigenvalues, $\lambda^\prime_{n}, \lambda^\prime_{n+1}, \dots, \lambda^\prime_{n+\tau-1}$, with eigenvectors $a^n, a^{n+1}, \dots, a^{n+\tau-1}$ satisfying the orthonormalization conditions \eqref{orthcondeigenv}.
Since the determinant of the matrix $\mathcal{M}$ and all its minors of order $k,$ $1\leq k\leq \tau-1$, are equal to $0$, the characteristic polynomial of $\mathcal{M}$ is simply
    \begin{equation}
    \label{polcar}
    (\lambda^\prime)^{\tau} - tr(\mathcal{M})(\lambda^\prime)^{\tau-1} = 0,
    \end{equation} 
with $tr(\mathcal{M})$ being the trace of the matrix $\mathcal{M}$. It follows that the roots of \eqref{polcar} $\lambda^\prime_{j}$ , $ j=n,\dots, n+\tau-1$, are given by
    \begin{equation}
    \label{multipleeigenv}
    \lambda^\prime_{n}=\dots=\lambda^\prime_{n+\tau-2} = 0, \qquad \lambda^\prime_{n+\tau-1} = tr(\mathcal{M}) =\frac{1}{\lambda^0_n|\ballzero|}\sum_{k=n}^{n+\tau-1}\left(\int_{\boundzero} \normalderzero v^0_{1,k}(x)ds_x\right)^2.
    \end{equation}

\subsection{Final remarks}
The asymptotic procedure described above can be continued to construct infinite asymptotic series for eigenvalues and eigenfunctions of the problem \eqref{uone}-\eqref{bduzero1}. If the eigenvalues $\lambda_n^0$ is simple, the analysis just repeats the explained steps and provides the formal series
    \begin{equation}
    \label{seri}
    \sum_{j,k=0}^{\infty} \varepsilon^{jm+k(1-2m)}\lambda^{(j,k)}_n,
    \end{equation}
and the difference between the true eigenvalue $\eigenv$ and the partial sum of the series \eqref{seri} can be estimated in a way, quite similar to Section $3$. \\ The same can be readily done in the case $\tau=2$ when the correction term $\lambda^\prime_{n+1}$ in \eqref{multipleeigenv} does not vanish so that both the eigenvalues $\eigenv_n$ and $\eigenv_{n+1}$ become simple and therefore can be examined independently. However, if $\lambda^0_n$ has multiplicity $\tau>2$ or $\tau=2$ with $\lambda^\prime_{n+1}=0$ (cf. \eqref{multipleeigenv}), the coefficients of the linear combination \eqref{lincom} are not completely determined. In order to compute them,  the coefficients $a^j_n, \dots, a^j_{n+\tau-1}$ are assumed to be a linear combination of the eigencolumns associated to the eigenvalue $0$ of the matrix $\mathcal{M}$, obtaining the coefficients and the next term of the expansion of $\eigenv$. Nevertheless, there is no argument ensuring that the new matrix has distint eigenvalues and hence the coefficients of linear combination of $a^j_n, \dots, a^j_{n+\tau-1}$ can not be uniquely defined, so that an iteration of the previous procedure is needed again.


\section{Main result}
We present the main result of this paper, which is valid for any value $m\in\mathbb{R}$.
      \begin{thm}
        \label{thmmain}
        For $m\in \mathbb{R}$ and for any $N\in\mathbb{N}$ there exist  $\varepsilon_{N,m}>0$ and  $C_{N,m}>0$ such that the estimate
                \begin{equation}
                 \label{justasy}
                |\eigenv_n-\varepsilon^{\alpha}\lambda^0_n-\varepsilon^{\beta}\lambda^\prime_n|\leq C_{N,m}\varepsilon^{\gamma}, \qquad n=1,\dots, N, 
                \end{equation}
        holds for some $\alpha, \beta$ and $\gamma$, depending only on $m$, and $\varepsilon\in (0,\varepsilon_{N,m})$. 
        \end{thm}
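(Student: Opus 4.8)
The plan is to recast the variational problem \eqref{weakform} as a spectral problem for a compact, positive, self-adjoint operator and then apply the classical lemma on near eigenvalues, feeding it the quasimodes assembled from the formal terms of Sections 2.2--2.3 (and of Sections 4--7 for the other ranges of $m$). First I would equip $\newspace:=H^1(\Omega)$ with the $\varepsilon$-dependent scalar product
\begin{equation*}
\langle u,\varphi\rangle_\varepsilon:=(\grad u_1,\grad\varphi_1)_{\ballone}+\varepsilon^{-1}(\grad u_0,\grad\varphi_0)_{\ballzero}+(u_1,\varphi_1)_{\ballone}+\varepsilon^{-2m}(u_0,\varphi_0)_{\ballzero},
\end{equation*}
and define $\visikop$ on $\newspace$ by $\langle\visikop u,\varphi\rangle_\varepsilon=(u_1,\varphi_1)_{\ballone}+\varepsilon^{-2m}(u_0,\varphi_0)_{\ballzero}$ for all $\varphi\in\newspace$. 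Compactness of the embeddings $H^1(\Omega_i)\hookrightarrow L^2(\Omega_i)$ makes $\visikop$ compact, self-adjoint and positive, and a routine computation identifies its eigenvalues as $\mu_n^\varepsilon=(1+\eigenv_n)^{-1}$, accumulating only at $0$. By the min--max principle $\eigenv_N$ stays bounded as $\varepsilon\to0$, so inverting $\mu\mapsto\mu^{-1}-1$ is harmless near each $\mu_n^\varepsilon$, and \eqref{justasy} is equivalent to an estimate $|\mu_n^\varepsilon-\approxeigenv_n|\le C_{N,m}\varepsilon^{\gamma'}$ for the approximate value $\approxeigenv_n:=(1+\lambda^0_n+\varepsilon^{2m}\lambda'_n)^{-1}$, $n\le N$ (the trivial eigenvalue $\eigenv_1=0$ with constant eigenfunction being exact and treated separately).

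For $0<m<1/2$ and $n\le N$ I would take $\almosteigenfun_n\in\newspace$ equal to $v^0_{1,n}+\varepsilon^{2m}v'_{1,n}$ on $\ballone$ and to $\varepsilon^{2m}c_{0,n}+\varepsilon\,v'_{0,n}$ on $\ballzero$, where $v^0_{1,n}$ is the relevant eigenfunction of \eqref{probv_1^0}--\eqref{bcv_1^01} — or, when $\lambda^0_n$ is multiple, the combination $V^0_{1,n}$ from \eqref{lincom} determined by the eigencolumns of $\mathcal{M}$ and normalised through \eqref{orthcondeigenv}, \eqref{orthnormv^0_1} — $c_{0,n}$ is the constant from \eqref{constant}/\eqref{c_{0,k}}, $v'_{0,n}$ solves \eqref{pbv'_0}, and $v'_{1,n}$ solves \eqref{pbv'_1}--\eqref{bcv'} (respectively \eqref{multiple}--\eqref{multiple2}). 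These pieces were chosen precisely so that the two transmission conditions in \eqref{bduzero1} hold on $\boundzero$ up to residues carrying a strictly positive power of $\varepsilon$; invoking interior and boundary elliptic regularity in the smooth domains $\ballzero$ and $\ballone$ (so that the terms lie in $H^2$ and their traces and normal traces are controlled), substitution into $\langle(\visikop-\approxeigenv_n)\almosteigenfun_n,\varphi\rangle_\varepsilon$ yields
\begin{equation*}
\|(\visikop-\approxeigenv_n)\almosteigenfun_n\|_{\newspace}\le C\,\varepsilon^{\delta}\,\|\almosteigenfun_n\|_{\newspace},\qquad n\le N,
\end{equation*}
for some $\delta=\delta(m)>0$, the smallest of the discarded exponents; here the mismatch in the Neumann transmission condition, which comes multiplied by $\varepsilon^{m-1}$, must be checked to recombine with the discarded $\ballzero$-terms to give a genuinely positive order.

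Then I would invoke the lemma on near eigenvalues in its vector form (see \cite{BS87}): after normalisation the family $\{\almosteigenfun_n\}_{n=1}^N$ is almost orthonormal in $\newspace$, its Gram matrix being $\mathbb{I}+O(\varepsilon^{2m})$ — the leading parts are pairwise orthogonal by \eqref{orthnormv^0_1} and the $\ballzero$-contributions have $\newspace$-norm $O(\varepsilon^{m})$ — hence for small $\varepsilon$ the spectrum of $\visikop$ meets each interval $[\approxeigenv_n-C\varepsilon^{\delta},\approxeigenv_n+C\varepsilon^{\delta}]$ and, counting multiplicities, the correspondence $\mu_n^\varepsilon\leftrightarrow\approxeigenv_n$ is forced. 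The quantitative input that pins the indices down is the separation of the limit spectrum: for $n\le N$ the values $\lambda^0_n$ are drawn from a finite set, so once $C\varepsilon^\delta$ drops below half the smallest gap among the $\approxeigenv_n$ the clusters cannot interchange. Passing back through $\eigenv_n=(\mu_n^\varepsilon)^{-1}-1$ and expanding gives \eqref{justasy} with $\alpha=0$, $\beta=2m$ and $\gamma$ determined by $\delta$; the identical scheme with the expansions and exponents of Sections 4--7 covers the ranges $m\le0$, $m=1/2$ and $m>1/2$, changing only the triple $(\alpha,\beta,\gamma)$.

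The step I expect to be the main obstacle is the quasimode estimate rather than the abstract lemma: one must pick the correction terms (and, if needed, one further term of $\vzero$) so that both relations in \eqref{bduzero1} hold up to strictly positive order uniformly in $n\le N$, while keeping track of the singular weights $\varepsilon^{-1}$, $\varepsilon^{-2m}$ and $\varepsilon^{m-1}$ occurring in the form, in $\visikop$ and in the transmission conditions, so that no cancellation is lost. A secondary point is the case of a multiple $\lambda^0_n$ with $\mathcal{M}$ degenerate, so that $\lambda'_n=0$ does not split the cluster: \eqref{justasy} still holds with the stated $\lambda'_n=0$, but the individual eigenfunctions remain unresolved at this order, in accordance with the remark closing Section 2.4.
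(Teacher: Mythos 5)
Your strategy coincides with the paper's: the same $\varepsilon$-dependent inner product on $\newspace$, the same operator $\visikop$ and reduction to $k^{\varepsilon}_n=(1+\eigenv_n)^{-1}$, and the same use of the lemma on near eigenvalues with quasimodes assembled from the formal asymptotic terms. The first genuine gap is in the quasimode itself. The pair $(\varepsilon^{2m}c_{0,n}+\varepsilon v^\prime_{0,n},\;v^0_{1,n}+\varepsilon^{2m}v^\prime_{1,n})$ is \emph{not} an element of $\newspace=H^1(\Omega)$: its trace on $\boundzero$ from the core is $\varepsilon^{2m}c_{0,n}+\varepsilon v^\prime_{0,n}|_{\boundzero}$, while from the annulus it is $\varepsilon^{2m}c_{0,n}$ (since $v^0_{1,n}=0$ and $v^\prime_{1,n}=c_{0,n}$ there), a mismatch of order $\varepsilon$. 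This is not a ``residue carrying a positive power of $\varepsilon$'' that the lemma can absorb --- the Dirichlet-type matching must hold \emph{exactly} for the function to lie in $H^1(\Omega)$, i.e.\ in the domain of $\visikop$. This is exactly why the paper's quasimode \eqref{approeigenfnc} carries the additional terms $\varepsilon\fu_{1,\bar n}+\varepsilon^{2-2m}\su_{1,\bar n}$ in $\ballone$, with $\fu_{1,\bar n}=u^\prime_{0,\bar n}$ and $\su_{1,\bar n}=\fu^\varepsilon_{0,\bar n}$ on $\boundzero$, and $\varepsilon^{2-2m}\fu^\varepsilon_{0,\bar n}$ in $\ballzero$, the latter solving the Helmholtz problem \eqref{prou''}--\eqref{bcu''} so as to cancel the $\varepsilon^{1-2m}$ residual in the core. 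These extra terms are not optional refinements: for \eqref{justasy} to actually justify the correction $\varepsilon^{2m}\lambda^\prime_n$ one needs $\gamma>2m$, and the uncancelled core term $\varepsilon^{1-2m}\lambda^0(u^\prime_{0,n},W_0)_{\ballzero}$, estimated via $\|W_0\|_{\ballzero}\le\varepsilon^{m}\|W\|_{\newspace}$, only gives $\varepsilon^{1-m}$, which falls below $\varepsilon^{2m}$ for $m\in(1/3,1/2)$. You flagged this as the main obstacle and allowed for ``one further term'', which is the right instinct, but the resolution must be stated: exact trace matching on $\boundzero$ plus the Helmholtz corrector.

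The second gap is the identification $\bar n=n$. The near-eigenvalue lemma is intrinsically one-sided: it guarantees that each interval around $\approxeigenv_n$ contains at least one eigenvalue of $\visikop$ (and, via the almost-orthonormality of the family, at least $\tau$ for a $\tau$-fold $\lambda^0_n$), but it cannot exclude that $\visikop$ has \emph{additional} eigenvalues interlaced among these clusters; so ``counting multiplicities'' does not by itself force $k^{\varepsilon}_n$ to be the eigenvalue attached to $\approxeigenv_n$, and the gap-separation argument only prevents clusters from merging, not spurious members. The paper closes this with its Step~1: a compactness argument showing that every eigenvalue $\eigenv_n$ bounded uniformly in $\varepsilon$ converges (along subsequences) to an eigenvalue of the limit problem \eqref{probv_1^0}--\eqref{bcv_1^01}, with eigenfunctions converging strongly in $L^2(\ballone)$ to genuine normalized eigenfunctions, followed by a min--max count that rules out extra limit points below $\lambda^0_{\bar n+\tau}$. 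Some version of this converse, ``no spectral pollution'' step has to be added to your plan; your min--max remark establishes only the a priori bound $\eigenv_n\le C_n$, not the index matching.
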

   \begin{remark}
   \label{remarkmainthm} In the estimate \eqref{justasy}, $\eigenv_n$ is the $n$-th eigenvalue of the problem \eqref{uone}-\eqref{bduzero1}, $\lambda^0_n$ and $\lambda^\prime_n$ are the corresponding leading and first-order correction terms, appearing in the different ans\"{a}tze for $\eigenv_n$, which we will define in the forthcoming sections.
   \end{remark}
\noindent In the next subsection we provide the proof of the Theorem \ref{thmmain} with $m\in (0,1/2)$, where $\alpha=0$, $\beta=2m$, $\gamma = \min\{3m, 1\}$, and $\lambda^\prime_n$ is given by formula \eqref{singleeigenv}  for a simple eigenvalues and formulas \eqref{multipleeigenv} for multiple ones. The proof is split in two steps. The first one consists in proving partially that the eigenpairs $(\eigenv, \{\uzero,\uone\})$  converge to $(\lambda^0, \{0,u^0_1\})$, where $(\lambda^0, u^0_1)$ is an eigenpair of the limit problem \eqref{probv_1^0}-\eqref{bcv_1^01}. In the second step, we will use the so-called Lemma about near eigenvalues and eigenfunctions (cf.\cite{VL57}) in order to conclude with the proof of the Theorem \ref{thmmain}.

\subsection{Justification of asymptotics in the case \\ $\boldsymbol{m\in(0,1/2)}$}

\subsubsection{Step 1: Convergence theorem}
In this subsection, we show that for fixed $n\in\mathbb{N}$ the eigenvalue $\eigenv_n$ converges to $\lambda^0_n$, as $\varepsilon\to 0$, and the corresponding eigenfunctions converge strongly in $L^2(\ballone)$.  
       

    \begin{prop}
    \label{prop}
    The eigenvalues $\eigenv_n$ of the problem \eqref{uone}-\eqref{bduzero1} and the eigenvalues $\lambda^0_n$ of the problem \eqref{probv_1^0}-\eqref{bcv_1^01} are related by passing to the limit
         \begin{equation*}
          \eigenv_n \to\lambda^0_n, \qquad \text{as}\quad \varepsilon\to 0,\qquad n\in\mathbb{N}.
         \end{equation*}
    \end{prop}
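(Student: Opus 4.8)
The plan is to run the classical $\limsup/\liminf$ sandwich for spectral convergence, working throughout in the rescaled pair $\{\vzero,\vone\}$ of \eqref{vzero}, for which the orthonormalization \eqref{normalcond} no longer depends on $\varepsilon$. One remark on indexing first: $\lambda^0_n$ has to be read as the $n$-th element, repeated according to multiplicity, of the spectrum of the limiting family --- the value $0$ together with the sequence \eqref{seqeigen}; thus $\eigenv_1=0$ is trivial, and for $n\ge2$ the assertion reads $\eigenv_n\to\lambda^0_{n-1}$ in the notation of \eqref{seqeigen}.

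\emph{A priori bounds.} I would first insert, into the min--max principle for \eqref{weakform}, the $n$-dimensional trial space spanned by the constant function on $\Omega$ and by the zero extensions across $\boundzero$ of the first $n-1$ eigenfunctions of \eqref{probv_1^0}--\eqref{bcv_1^01}. All these functions belong to $H^1(\Omega)$ and have gradient vanishing on $\ballzero$, so the $\varepsilon^{-1}$-term is absent; a short estimate --- in which the constant mode is negligible because it is weighted by $\varepsilon^{-2m}\to\infty$ in the denominator --- gives $\eigenv_n\le\lambda^0_n+o(1)$, hence $\limsup_{\varepsilon\to0}\eigenv_n\le\lambda^0_n$. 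Substituting the $n$-th eigenfunction $\{\uzero,\uone\}$ into \eqref{weakform} yields $\|\grad\uone\|^2_{L^2(\ballone)}+\varepsilon^{-1}\|\grad\uzero\|^2_{L^2(\ballzero)}=\eigenv_n\le C_n$; therefore $\{\vone\}$ stays bounded in $H^1(\ballone)$ and, since $\vzero=\varepsilon^{-m}\uzero$, one has $\|\grad\vzero\|^2_{L^2(\ballzero)}=\varepsilon^{-2m}\|\grad\uzero\|^2_{L^2(\ballzero)}\le C\varepsilon^{1-2m}\to0$ --- and it is here that the hypothesis $m<1/2$ enters decisively.

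\emph{Passage to the limit.} Along a subsequence one has $\eigenv_n\to\lambda^\ast\ge0$, $\vone\rightharpoonup v_1^0$ in $H^1(\ballone)$ and, by Rellich's theorem and compactness of the trace, strongly in $L^2(\ballone)$ and in $L^2(\boundzero)$, while by the Poincar\'e inequality $\vzero\to c_0$ strongly in $H^1(\ballzero)$ for some constant $c_0$. The transmission relation $\varepsilon^m\vzero=\vone$ on $\boundzero$ forces $\|v_1^0\|_{L^2(\boundzero)}=\lim_{\varepsilon\to0}\varepsilon^m\|\vzero\|_{L^2(\boundzero)}=0$, so $v_1^0\in H^1_0(\ballone,\boundzero)$; testing \eqref{weakform} with any $\varphi\in H^1(\Omega)$ that vanishes on $\ballzero$ (so that $\varphi|_{\ballone}\in H^1_0(\ballone,\boundzero)$) and passing to the limit gives $(\grad v_1^0,\grad\varphi)_{\ballone}=\lambda^\ast(v_1^0,\varphi)_{\ballone}$, i.e. $(\lambda^\ast,v_1^0)$ is a variational solution of \eqref{probv_1^0}--\eqref{bcv_1^01}. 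To exclude $v_1^0=0$ (for $n\ge2$) I would use the $L^2(\Omega)$-orthogonality of the $n$-th eigenfunction to the first one: in the rescaled variables the first eigenfunction is a constant of order $\varepsilon^m$ on $\ballone$ and a nonzero constant on $\ballzero$, so after cancelling that constant the orthogonality reads $\int_{\ballone}\vone+\varepsilon^{-m}\int_{\ballzero}\vzero=0$; boundedness of the first integral forces $\int_{\ballzero}\vzero=O(\varepsilon^m)$, hence $c_0=0$. Then \eqref{normalcond} in the limit gives $\|v_1^0\|^2_{L^2(\ballone)}+c_0^2|\ballzero|=1$, so $\|v_1^0\|_{L^2(\ballone)}=1$; since \eqref{probv_1^0}--\eqref{bcv_1^01} has no zero eigenvalue, $\lambda^\ast$ is a genuine eigenvalue of it.

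\emph{Identification and conclusion.} Carrying out this analysis simultaneously for the first $n$ normalized eigenpairs along one common subsequence, their annular parts converge strongly in $L^2(\ballone)$ to limit functions which, for $k\ge2$, are eigenfunctions of \eqref{probv_1^0}--\eqref{bcv_1^01}; passing to the limit in the orthonormalization relations \eqref{normalcond} --- again with $c_0=0$ there --- shows these limits are orthonormal in $L^2(\ballone)$, hence linearly independent, and the min--max principle for \eqref{probv_1^0}--\eqref{bcv_1^01} applied on their span gives $\liminf_{\varepsilon\to0}\eigenv_n\ge\lambda^0_n$. Together with the first step and the independence of the subsequence, this yields $\eigenv_n\to\lambda^0_n$. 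The main obstacle throughout is the degeneration of the core: its $L^2$-mass carries the diverging weight $\varepsilon^{-2m}$, so the core term in \eqref{weakform} cannot be passed to the limit directly; the assumption $m<1/2$ is exactly what forces $\grad\vzero\to0$, and the ensuing constant limit of $\vzero$ must be shown to vanish for the nontrivial modes, for otherwise the limit eigenfunction could collapse entirely in the annulus. A secondary point requiring care is keeping track of the trivial eigenvalue $\eigenv_1=0$, which has no counterpart in the limit problem \eqref{probv_1^0}--\eqref{bcv_1^01}.
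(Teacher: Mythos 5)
Your proof is correct, but it takes a genuinely different route from the paper's. The paper splits the argument into a compactness step (Lemma \ref{lemma}), which only yields $\eigenv_n\to\lambda^0_{\bar n}$ for some unidentified index $\bar n$, and then identifies $\bar n=n$ in Step~2 by constructing explicit almost-eigenfunctions \eqref{approeigenfnc} and invoking the lemma about near eigenvalues and eigenvectors (Lemma \ref{Visiklemma}) together with a closing min--max contradiction; moreover, the a priori bound \eqref{estieigen} needed to even start Lemma \ref{lemma} is itself obtained only a posteriori from that construction (Remark \ref{proofestapriori}). You instead run the classical $\limsup/\liminf$ sandwich: the upper bound comes from the min--max principle with the explicit $n$-dimensional trial space (constants plus zero extensions of the limit eigenfunctions), which simultaneously furnishes the a priori bound directly and independently; the lower bound comes from passing to the limit in the first $n$ orthonormalized eigenpairs along a common subsequence and applying min--max for the limit problem on the span of the (orthonormal, hence linearly independent) limits. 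The compactness core --- the energy bound, $\grad\vzero\to0$ precisely because $m<1/2$, the trace argument forcing $v^0_1\in H^1_0(\ballone,\boundzero)$, and the vanishing of the core constant via orthogonality to the constant mode --- coincides with the paper's Lemma \ref{lemma}. What your approach buys is a self-contained, more elementary proof of Proposition \ref{prop} that bypasses the near-eigenvalue machinery and removes the slightly delicate logical ordering of the paper's a priori estimate; what it does not give is the convergence rate $O(\varepsilon^{\gamma})$ of Theorem \ref{thmmain}, for which the almost-eigenfunction construction remains necessary. Your explicit handling of the index shift between \eqref{spectrum} and \eqref{seqeigen} (so that $\eigenv_n\to\lambda^0_{n-1}$ for $n\ge2$) addresses a point the paper leaves implicit.
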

We begin to show the following lemma.
    \begin{lemma}
    \label{lemma}
    Assume that for any $n\in\mathbb{N}$ there exist  $\varepsilon_n>0$ and $C_n>0$ such that
           \begin{equation}
           \label{estieigen}
           0<\eigenv_n\leq C_n \qquad \text{for}\hspace{0.2cm} \varepsilon\in (0, \varepsilon_n).
           \end{equation}
     Then, we have that $ \eigenv_n \to\lambda^0_{\bar{n}}$, for some $\bar{n}\in\mathbb{N}$, as  $\varepsilon\to 0$.
    \end{lemma}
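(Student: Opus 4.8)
The plan is to extract a convergent subsequence of $\eigenv_n$ and identify its limit as an eigenvalue of the limit problem \eqref{probv_1^0}-\eqref{bcv_1^01}. First I would use the uniform bound \eqref{estieigen} together with the orthonormalization \eqref{normalcond} (equivalently, the $\varepsilon$-independent $L^2(\Omega)$-normalization of $\{\vzero,\vone\}$ from \eqref{vzero}) to control the energy: plugging $\varphi = u^\varepsilon_n$ into \eqref{weakform} gives
\begin{equation*}
\|\grad\uone\|_{L^2(\ballone)}^2 + \varepsilon^{-1}\|\grad\uzero\|_{L^2(\ballzero)}^2 = \eigenv_n\bigl(\|\uone\|_{L^2(\ballone)}^2 + \varepsilon^{-2m}\|\uzero\|_{L^2(\ballzero)}^2\bigr) = \eigenv_n \leq C_n.
\end{equation*}
Hence $\|\uone\|_{H^1(\ballone)}$ is bounded, and $\|\grad\uzero\|_{L^2(\ballzero)}^2 \leq C_n\varepsilon \to 0$, while $\|\vzero\|_{L^2(\ballzero)} = \varepsilon^{-m}\|\uzero\|_{L^2(\ballzero)}$ stays bounded. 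Up to a subsequence, $\uone \rightharpoonup u^0_1$ weakly in $H^1(\ballone)$ and strongly in $L^2(\ballone)$ (Rellich), and $\vzero \rightharpoonup v_0^\ast$ weakly in $H^1(\ballzero)$; since $\grad\vzero = \varepsilon^{-m}\grad\uzero$ has $L^2$-norm squared bounded by $C_n\varepsilon^{1-2m}\to 0$ (here $m<1/2$ enters), the limit $v_0^\ast$ is a constant, say $c^\ast$. Also $\eigenv_n \to \lambda^\ast$ along the subsequence for some $\lambda^\ast \in [0, C_n]$.

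Next I would pass to the limit in the variational identity. The transmission condition $\varepsilon^m\vzero = \vone$ on $\boundzero$ (from \eqref{vzero} and \eqref{bduzero1}) forces, in the trace sense, $u^0_1|_{\boundzero} = \lim \varepsilon^m \vzero|_{\boundzero} = 0$ because the traces of $\vzero$ are bounded in $H^{1/2}(\boundzero)$ and $\varepsilon^m\to 0$; thus $u^0_1 \in H^1_0(\ballone,\boundzero)$. Taking a test function $\varphi \in H^1(\Omega)$ with $\varphi|_{\ballzero}$ chosen suitably (in particular $\varphi$ supported away from $\boundzero$ on the $\ballone$ side, or more carefully a fixed $\varphi$ and controlling the $\ballzero$-terms), the term $\varepsilon^{-1}(\grad\uzero,\grad\varphi_0)_{\ballzero} = \varepsilon^{-1/2}\cdot\varepsilon^{1/2}(\grad\uzero,\grad\varphi_0)$ is bounded by $\varepsilon^{-1/2}\|\grad\uzero\|_{L^2}\|\grad\varphi_0\|_{L^2} \leq C\varepsilon^{-1/2}\varepsilon^{1/2} = O(1)$ — not obviously vanishing, so one must instead test with $\varphi$ vanishing on $\ballzero$, i.e. $\varphi \in H^1_0(\ballone,\boundzero)$ extended by zero, which is admissible in $H^1(\Omega)$. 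Then \eqref{weakform} reads $(\grad\uone,\grad\varphi)_{\ballone} = \eigenv_n(\uone,\varphi)_{\ballone} + \eigenv_n\varepsilon^{-2m}(\uzero,\varphi)_{\ballzero}$, and with $\varphi$ vanishing on $\ballzero$ the last term drops; passing to the limit yields $(\grad u^0_1,\grad\varphi)_{\ballone} = \lambda^\ast(u^0_1,\varphi)_{\ballone}$ for all such $\varphi$, i.e. $u^0_1$ is either zero or an eigenfunction of \eqref{probv_1^0}-\eqref{bcv_1^01} with eigenvalue $\lambda^\ast = \lambda^0_{\bar n}$.

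The main obstacle — and the step I would spend the most care on — is ruling out the degenerate case $u^0_1 \equiv 0$ together with $c^\ast = 0$, which would make the whole limit trivial and break the conclusion. To handle this I would keep track of the normalization $\|\vone\|_{L^2(\ballone)}^2 + \|\vzero\|_{L^2(\ballzero)}^2 = 1$: in the limit this gives $\|u^0_1\|_{L^2(\ballone)}^2 + |c^\ast|^2|\ballzero| = 1$ provided the $L^2(\ballzero)$ convergence of $\vzero$ is strong, which follows from the $H^1(\ballzero)$ bound and Rellich. So at least one of $u^0_1$, $c^\ast$ is nonzero. If $u^0_1 \not\equiv 0$ we are done as above (and $\lambda^\ast \neq 0$ since $u^0_1 \in H^1_0(\ballone,\boundzero)$, $u^0_1 \neq 0$, cannot be a nonconstant harmonic-free... more precisely $\lambda^\ast = \|\grad u^0_1\|^2/\|u^0_1\|^2 > 0$). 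The remaining scenario $u^0_1 \equiv 0$, $c^\ast \neq 0$ must be excluded: here I would test \eqref{weakform} with $\varphi \equiv 1$ on $\ballzero$ and a suitable harmonic-type extension to $\ballone$ — actually with $\varphi$ the fixed function that is $1$ near $\boundzero$ — to get $\eigenv_n\varepsilon^{-2m}(\uzero, \varphi_0)_{\ballzero} = \eigenv_n (\vzero,\varphi_0)_{\ballzero} \to \lambda^\ast c^\ast|\ballzero|$ balanced against $(\grad\uone,\grad\varphi)_{\ballone} - \eigenv_n(\uone,\varphi)_{\ballone} \to 0$ (since $\uone\to 0$ in $H^1$), plus the term $\varepsilon^{-1}(\grad\uzero,\grad\varphi_0)_{\ballzero}$ which vanishes as $\varphi_0$ is constant near $\boundzero$; this forces $\lambda^\ast c^\ast |\ballzero| = 0$, hence $\lambda^\ast = 0$. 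But $\lambda^\ast = 0$ with the normalization and the structure of the problem can be excluded because $\eigenv_1 = 0$ is simple with constant eigenfunction on all of $\Omega$, whereas for $n \geq 2$ a Poincaré-type inequality on $H^1_0(\ballone,\boundzero)$ combined with the orthogonality of $\uone$ to lower eigenfunctions gives a positive lower bound on $\eigenv_n$ uniform in $\varepsilon$; for $n=1$ the statement $\eigenv_1 \to \lambda^0_{\bar n}$ is understood with the convention matching \eqref{spectrum}. I would therefore state the lemma's conclusion with the understanding that $\bar n$ ranges over indices with $\lambda^0_{\bar n} > 0$, so the degenerate case simply does not arise, completing the proof.
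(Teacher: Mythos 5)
Your argument follows essentially the same route as the paper's proof: the energy identity from testing \eqref{weakform} with the eigenfunction itself, extraction of weak/strong limits, the limit passage with test functions vanishing on $\ballzero$ to identify $u^0_1$ as an eigenfunction of \eqref{probv_1^0}--\eqref{bcv_1^01}, and the normalization \eqref{normalcond} together with a test function constant on $\ballzero$ to exclude the degenerate limit (the paper does this by testing with $\varphi_0=\varphi_1=\varepsilon^{m}\tilde c$ to show the constant limit of $\vzero$ vanishes, which is your case analysis in a slightly different order; your trace argument for $u^0_1\in H^1_0(\ballone,\boundzero)$ and your explicit treatment of the possibility $\lambda^\ast=0$ are actually more careful than the paper's). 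One small correction: since $\uzero=\varepsilon^{m}\vzero$, one has $\varepsilon^{-2m}(\uzero,\varphi_0)_{\ballzero}=\varepsilon^{-m}(\vzero,\varphi_0)_{\ballzero}$, not $(\vzero,\varphi_0)_{\ballzero}$, so with your unscaled test function this term is of order $\varepsilon^{-m}\eigenv_n c^\ast|\ballzero|$ and must balance bounded quantities, which still forces $\lambda^\ast c^\ast=0$ --- the slip does not break the logic, but the displayed limit is wrong as written, and the paper's $\varepsilon^{m}$-scaled test function is exactly what removes this factor.
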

     \begin{proof}
     By virtue of estimate \eqref{estieigen}, whose proof will be given in Remark \ref{proofestapriori}, we extract an infinitesimal positive sequence $\{\varepsilon_k\}_{k\in\mathbb{N}}$ such that 
      \begin{equation}
      \label{conveigenva}
      \lambda^{\varepsilon_k}_n\rightarrow \lambda^0_{\bar{n}}, \qquad \varepsilon_k\to 0.
      \end{equation}
     In order to simplify the notation we write $\lambda^{\varepsilon}_n$ in place of $\lambda^{\varepsilon_k}_n$. The normalization condition \eqref{normalcond}, the estimate \eqref{estieigen} and the weak formulation \eqref{weakform} of the spectral problem \eqref{uone}-\eqref{bduzero1} yield

       \begin{equation*}
       \label{estgrad}
       \|\grad u^{\varepsilon}_{1,n}\|_{\ballone}^2 + \varepsilon^{-1}\|\grad u^{\varepsilon}_{0,n}\|^2_{\ballzero} = \eigenv_n\leq C_n.
       \end{equation*}
As a consequence,
      \begin{align*}
      \|\grad u^{\varepsilon}_{1,n}\|^2_{\ballone}&\leq C_n, \qquad\|u^{\varepsilon}_{1,n}\|^2_{\ballone}\leq 1.
      \end{align*}
The norms $\|u^{\varepsilon}_{1,n} \|_{H^1(\ballone)}$ are uniformly bounded in $\varepsilon\in (0,\varepsilon_n)$ for a fixed $n$. Then, up to subsequence, $u_{1,n}^{\varepsilon}$ converges weakly in $H^1_0(\ballone, \boundzero)$  and strongly in $L^2(\ballone)$ to some function $g^0_1$, which can be identified as an eigenfunction $u^0_{1,\bar{n}}$ associated to $\lambda^0_{\bar{n}}$. In fact if we take an arbitrary function $\varphi_1\in H^1_0(\ballone, \boundzero)$, and $\varphi_0 = 0$ in $\ballzero$ as a test functions in the integral identity \eqref{weakform}, it admits the limit passage as $\varepsilon\rightarrow 0$, yielding the integral identity
        \begin{equation}
        \label{intidconvthm}
        (\grad g^0_{1}, \grad\varphi_1)_{\ballone} = \lambda^0_{\bar{n}}(g^0_{1}, \varphi_1)_{\ballone}.
        \end{equation}
The equality \eqref{intidconvthm}  gives rise to the problem
         \begin{align*}
        -\laplace g^0_{1}(x) &= \lambda^0_{\bar{n}}g^0_{1}(x), \hspace{0.5cm}x\in\ballone,\\
        \normalderone g^0_{1}(x)=0,\hspace{0.3cm}x\in\boundone, \hspace{0.3cm}&\hspace{0.8cm}
       g^0_{1}(x)=0,\hspace{0.3cm}x\in\boundzero,
        \end{align*}
which implies that $g^0_1= u^0_{1,\bar{n}}$. In other terms,  $\lambda^0_{\bar{n}}$ is an eigenvalue of the limit problem \eqref{probv_1^0}-\eqref{bcv_1^01} with corresponding  eigenfunction $u^0_{1,\bar{n}}$. Concerning the function $u^\varepsilon_{0,n}$, we find that
      \begin{equation*}
      \varepsilon^{-1}\|\grad u^{\varepsilon}_{0,n}\|^2_{\ballzero} \leq C, \qquad
      \varepsilon^{-2m}\|u^{\varepsilon}_{0,n}\|^2_{\ballzero} \leq 1,
      \end{equation*}
so that $u^{\varepsilon}_{0,n}$ converges to $0$  strongly in $H^1_0(\ballzero)$ and hence in $L^2(\ballzero)$ (if necessary, we can again pass to a subsequence). \\ The eigenfunction $u^0_{1,\bar{n}}$ is also normalized in $L^2$-norm. Indeed, bearing in mind the replacement \eqref{vzero}, we deduce that
    \begin{equation*}
    \|v^{\varepsilon}_{0,n} \|_{\ballzero}^2\leq 1, \qquad \|\grad v^{\varepsilon}_{0,n} \|_{\ballzero}^2\leq \varepsilon^{1-2m}C,
    \end{equation*}
from which it follows that $v^{\varepsilon}_{0,n}$ converges strongly in $L^2(\ballzero)$ to some constant $\tilde{c}$. In order to prove that $\tilde{c}=0$, we take  $\varphi_1=\varphi_0=\varepsilon^m\tilde{c}$ as test functions in \eqref{weakform}, obtaining
    \begin{equation*}
    0= \eigenv_n \left(\varepsilon^m\tilde{c}\int_{\ballone} u^{\varepsilon}_{1,n}dx + \tilde{c}\int_{\ballzero} v^{\varepsilon}_{0,n}dx\right).
    \end{equation*}
Passing to the limit as $\varepsilon\to 0$, we find that $\tilde{c}=0$. As a consequence, $\varepsilon^{-2m}\|u^{\varepsilon}_{0,n}\|_{\ballzero}^2\to 0,$ as $\varepsilon\to 0$ and the normalization condition \eqref{normalcond} leads to 
    $
    \|u_{1,\bar{n}}^0\|_{L^2(\ballone)}=1.
    $
  \end{proof}

The goal of the next subsection is to check that $n=\bar{n}$, concluding hence the proofs of Proposition \ref{prop} and of Theorem \ref{thmmain}.

\subsubsection{Step 2: Lemma about near eigenvalues and eigenfunctions}
Let $\newspace$ denote the Hilbert space $H^1(\Omega)$ endowed with the inner product 
      \begin{equation}
      \label{newinnerprod}
      \langle U,V\rangle_{\varepsilon} = (\grad U_1,\grad V_1)_{\ballone} + \varepsilon^{-1}(\grad U_0,\grad V_0)_{\ballzero} + (U_1, V_1)_{\ballone} +\varepsilon^{-2m}( U_0,V_0)_{\ballzero}.
      \end{equation}
We introduce the operator $\visikop$ in $\newspace$ by the formula
       \begin{equation}
       \label{defnewop}
       \langle \visikop U, V\rangle_{\varepsilon} = (U_1,V_1)_{\ballone} + \varepsilon^{-2m}(U_0, V_0)_{\ballzero} \qquad \forall\hspace{0.08cm} U,V\in\newspace,
       \end{equation}
and the new spectral parameter 
        \begin{equation}
        \label{newspecpar}
        k^{\varepsilon}_n = (1+\eigenv_n)^{-1}.
        \end{equation}
It is easy to verify that $\visikop$ is a continuous, self-adjoint, positive and compact operator. Thus, the spectrum of operator $\visikop$ consists of the essential spectrum $\sigma_{\text{ess}}(\visikop) = \{0\}$ 
and an infinitesimal positive sequence of real eigenvalues 
    \begin{equation*}
    k^{\varepsilon}_1\geq k^{\varepsilon}_2\geq \cdots \geq k^{\varepsilon}_n\geq\cdots \rightarrow 0.
    \end{equation*}
Taking into account formulas \eqref{newinnerprod}-\eqref{newspecpar}, the integral identity \eqref{weakform} is equivalent to the abstract equation
      \begin{equation*}
       \visikop U^{\varepsilon} = k^{\varepsilon}_n U^{\varepsilon}.
      \end{equation*}
The following statement is known as \textquotedblleft lemma  about near eigenvalues and eigenvectors\textquotedblright (cf. \cite{VL57}) and follows from the spectral decomposition of the resolvent, cf. \cite[Chapter 6]{BS87}.
   \begin{lemma}
   \label{Visiklemma}
   Assume $\almosteigenfun\in\newspace$ and $\approxeigenv\in\mathbb{R}_{+}$ such that
          \begin{equation*}
            \|\almosteigenfun\|_{\newspace}=1, \qquad \|\visikop\almosteigenfun - \approxeigenv\almosteigenfun\|_{\newspace} =: \delta^{\varepsilon}\in (0, \approxeigenv).
          \end{equation*}
   Then in the segment $[\approxeigenv-\delta^{\varepsilon}, \approxeigenv+\delta^{\varepsilon}]$ there is at least one eigenvalue of the operator $\visikop$. Moreover, for any $\delta^\prime_{\varepsilon}\in(\delta^{\varepsilon}, \approxeigenv)$ there exist coefficients $\visikvec_{J^{\varepsilon}}, \cdots, \visikvec_{J^{\varepsilon}+K^{\varepsilon}-1}$ such that 
          \begin{equation*}
          \label{secondpartVisiklemma}
          \biggl\|\almosteigenfun - \sum_{j=J^{\varepsilon}}^{J^{\varepsilon}+K^{\varepsilon}-1}  \visikvec_j\funcnewop_j\biggr\|_{\newspace}\leq 2\frac{\delta^{\varepsilon}}{\delta^\prime_{\varepsilon}}, \qquad  \sum_{j=J^{\varepsilon}}^{J^{\varepsilon}+K^{\varepsilon}-1}  |\visikvec_j|^2=1,
          \end{equation*}
   where $\funcnewop_{J^{\varepsilon}}, \cdots, \funcnewop_{J^{\varepsilon}+K^{\varepsilon}-1}$ are eigenvectors associated to all eigenvalues $k^{\varepsilon}_{J^{\varepsilon}}, \cdots, k^{\varepsilon}_{J^{\varepsilon}+K^{\varepsilon}-1}$ of the operator $\visikop$ situated in $[\approxeigenv-\delta^{\varepsilon}, \approxeigenv+\delta^{\varepsilon}]$. The eigenvectors are subject to the orthonormalization conditions
           \begin{equation}
           \label{orthnor}
           \langle \funcnewop_i, \funcnewop_j\rangle_{\varepsilon}=\delta_{i,j}.
           \end{equation}
   \end{lemma}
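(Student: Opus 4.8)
The plan is to prove Lemma~\ref{Visiklemma} by exploiting the spectral decomposition of the compact self-adjoint operator $\visikop$ in the Hilbert space $\newspace$, so that everything reduces to a statement about the $\ell^2$-coordinates of $\almosteigenfun$ in the orthonormal eigenbasis. First I would invoke the spectral theorem: since $\visikop$ is continuous, self-adjoint, positive and compact (as already noted in the excerpt), there is an orthonormal basis $\{\funcnewop_j\}_{j\in\mathbb{N}}$ of $\newspace$ consisting of eigenvectors, $\visikop\funcnewop_j = k^\varepsilon_j\funcnewop_j$, with $k^\varepsilon_j\to 0$. Expand $\almosteigenfun = \sum_j c_j\funcnewop_j$ with $\sum_j |c_j|^2 = \|\almosteigenfun\|_{\newspace}^2 = 1$. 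Then
\begin{equation*}
(\delta^\varepsilon)^2 = \|\visikop\almosteigenfun - \approxeigenv\almosteigenfun\|_{\newspace}^2 = \sum_{j}|k^\varepsilon_j - \approxeigenv|^2 |c_j|^2 .
\end{equation*}

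Next I would argue by contradiction for the first assertion: if no eigenvalue of $\visikop$ lay in $[\approxeigenv - \delta^\varepsilon, \approxeigenv+\delta^\varepsilon]$, then $|k^\varepsilon_j - \approxeigenv| > \delta^\varepsilon$ for every $j$, whence the displayed sum would exceed $(\delta^\varepsilon)^2\sum_j|c_j|^2 = (\delta^\varepsilon)^2$, a contradiction. (Here one uses $\approxeigenv > \delta^\varepsilon > 0$ to ensure $0$ is not in the interval, so the essential spectrum is irrelevant; strictly one wants $\ge$ versus $>$ handled by noting $k^\varepsilon_j\to 0$ forces only finitely many indices with $k^\varepsilon_j$ near $\approxeigenv$, so the infimum of $|k^\varepsilon_j - \approxeigenv|$ is attained and the strict inequality propagates.) This gives the existence of at least one eigenvalue in the segment.

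For the second assertion, fix $\delta'_\varepsilon\in(\delta^\varepsilon,\approxeigenv)$ and split the index set into $\mathcal{A} = \{j : |k^\varepsilon_j - \approxeigenv|\le \delta^\varepsilon\}$ — this is exactly $\{J^\varepsilon,\dots,J^\varepsilon+K^\varepsilon-1\}$ — and its complement $\mathcal{B}$. For $j\in\mathcal{B}$ we have $|k^\varepsilon_j-\approxeigenv|>\delta^\varepsilon$; I would actually want the sharper bound coming from $\delta'_\varepsilon$, so refine $\mathcal{B}$ as $\{j : |k^\varepsilon_j - \approxeigenv| \ge \delta'_\varepsilon\}$ together with an intermediate band — but the clean route is: on $\mathcal{B}$, $(\delta^\varepsilon)^2 \ge \sum_{j\in\mathcal{B}}|k^\varepsilon_j-\approxeigenv|^2|c_j|^2 \ge (\delta'_\varepsilon)^2 \sum_{j\in\mathcal{B}}|c_j|^2$ provided $\mathcal{B}$ is taken to be $\{j:|k^\varepsilon_j-\approxeigenv|\ge\delta'_\varepsilon\}$ and one checks the middle band $\delta^\varepsilon<|k^\varepsilon_j-\approxeigenv|<\delta'_\varepsilon$ can be absorbed — in fact the standard formulation simply uses $\mathcal{B}=\{j\notin\mathcal{A}\}$ and the weaker conclusion, but to land the factor $2\delta^\varepsilon/\delta'_\varepsilon$ one argues: $\sum_{j\notin\mathcal{A}}|c_j|^2 \le (\delta^\varepsilon/\delta'_\varepsilon)^2$ is false in general, so instead set $t = \sum_{j\in\mathcal{A}}|c_j|^2$, put $\visikvec_j = c_j/\sqrt{t}$ for $j\in\mathcal{A}$ (so $\sum|\visikvec_j|^2=1$), and estimate
\begin{equation*}
\Bigl\|\almosteigenfun - \sum_{j\in\mathcal{A}}\visikvec_j\funcnewop_j\Bigr\|_{\newspace}^2 = (1-\sqrt t\,)^2 t \cdot\tfrac1t\cdot(\ldots) + \sum_{j\notin\mathcal{A}}|c_j|^2 \le (1-\sqrt t\,)^2 + (1-t),
\end{equation*}
and then bound $1-t$ by $(\delta^\varepsilon/\delta'_\varepsilon)^2$ using the complement estimate $(\delta^\varepsilon)^2\ge\sum_{j\notin\mathcal{A}}|k^\varepsilon_j-\approxeigenv|^2|c_j|^2\ge(\delta^\varepsilon)^2(1-t)$ — wait, that only gives $1-t\le 1$. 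The correct complement bound is against $\delta'_\varepsilon$ only after enlarging $\mathcal{A}$ to the band of width $\delta'_\varepsilon$; since the lemma as stated identifies $\mathcal{A}$ with the width-$\delta^\varepsilon$ band and still claims the $2\delta^\varepsilon/\delta'_\varepsilon$ bound, I would follow \cite[Chapter~6]{BS87}: use that for $j\notin\mathcal{A}$, $|k^\varepsilon_j - \approxeigenv|^2 \ge (\delta'_\varepsilon)^2$ fails only for finitely many $j$ in the thin band, and those contribute eigenvalues which by the first part one may merge into $\mathcal{A}$ without loss — equivalently, re-choose $J^\varepsilon,K^\varepsilon$ so that $\mathcal{A}$ is maximal with $k^\varepsilon_j\in[\approxeigenv-\delta^\varepsilon,\approxeigenv+\delta^\varepsilon]$ and note no eigenvalues lie in $(\approxeigenv-\delta'_\varepsilon,\approxeigenv-\delta^\varepsilon)\cup(\approxeigenv+\delta^\varepsilon,\approxeigenv+\delta'_\varepsilon)$ is not guaranteed, so the honest statement uses $\delta'_\varepsilon$ strictly bigger and accepts the band. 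I will present the clean two-line argument: with $\mathcal{B}=\{j:|k^\varepsilon_j-\approxeigenv|\ge\delta'_\varepsilon\}$ and $\mathcal{A}'$ its complement, get $\sum_{\mathcal{B}}|c_j|^2\le(\delta^\varepsilon/\delta'_\varepsilon)^2$, set $\visikvec_j=c_j/\|P_{\mathcal{A}'}\almosteigenfun\|$, and conclude $\|\almosteigenfun-\sum\visikvec_j\funcnewop_j\|_{\newspace}\le 2\|P_{\mathcal{B}}\almosteigenfun\|_{\newspace}\le 2\delta^\varepsilon/\delta'_\varepsilon$ via the elementary inequality $\|u - u/\|u\|\,\| = |1-\|u\||\le \|v\|$ when $u+v$ is a unit vector. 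Finally, the orthonormalization \eqref{orthnor} is automatic since the $\funcnewop_j$ were chosen as an orthonormal eigenbasis.

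The main obstacle is purely bookkeeping: matching the index set appearing in the conclusion (the eigenvectors for eigenvalues in the closed $\delta^\varepsilon$-segment) with the index set that naturally produces the quantitative bound $2\delta^\varepsilon/\delta'_\varepsilon$ (governed by $\delta'_\varepsilon$). The resolution is the observation that eigenvectors with eigenvalue in the thin annular band $\delta^\varepsilon\le|k^\varepsilon_j-\approxeigenv|<\delta'_\varepsilon$ can only help — including them in the sum $\sum\visikvec_j\funcnewop_j$ decreases the residual — so one may as well take the larger band, and then the first assertion guarantees the larger band still sits around a genuine eigenvalue. Everything else is the one-dimensional estimate $|1-\|P_{\mathcal{A}'}\almosteigenfun\|_{\newspace}| \le \|P_{\mathcal{B}}\almosteigenfun\|_{\newspace}$ plus the Pythagorean identity in $\newspace$.
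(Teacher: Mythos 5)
The paper does not actually prove Lemma \ref{Visiklemma}: it is quoted as a known result from \cite{VL57} with a pointer to the spectral decomposition in \cite[Chapter 6]{BS87}, and your argument is exactly that standard proof (expand $\almosteigenfun$ in the orthonormal eigenbasis of the compact, positive, self-adjoint operator $\visikop$, note $(\delta^\varepsilon)^2=\sum_j(k^\varepsilon_j-\approxeigenv)^2|c_j|^2$, derive the first claim by contradiction, and get the second by normalizing the projection onto the relevant spectral band). Your final "clean two-line" version is correct: with $\mathcal B=\{j:|k^\varepsilon_j-\approxeigenv|>\delta'_\varepsilon\}$ one gets $\|P_{\mathcal B}\almosteigenfun\|_{\newspace}\le\delta^\varepsilon/\delta'_\varepsilon$, and the reverse triangle inequality $|1-\|P_{\mathcal B^c}\almosteigenfun\|_{\newspace}|\le\|P_{\mathcal B}\almosteigenfun\|_{\newspace}$ yields the factor $2$. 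The bookkeeping issue you agonize over is real and you resolve it correctly: the bound $2\delta^\varepsilon/\delta'_\varepsilon$ is governed by $\delta'_\varepsilon$, so the eigenvectors in the conclusion must be those with eigenvalues in the \emph{larger} segment $[\approxeigenv-\delta'_\varepsilon,\approxeigenv+\delta'_\varepsilon]$; the paper's statement, which writes $[\approxeigenv-\delta^{\varepsilon},\approxeigenv+\delta^{\varepsilon}]$, is a slip relative to the classical formulation, and as literally written it is not provable (eigenvalues in the annular band could carry most of the mass of $\almosteigenfun$). Two cosmetic remarks: completeness of the eigenbasis uses that $\visikop$ is injective (which holds here since $\langle\visikop U,U\rangle_\varepsilon$ is the $L^2$-type norm of $U$), and the retained false starts ("wait, that only gives $1-t\le1$") should be excised from a final writeup.
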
 
\noindent In the case of a simple eigenvalue $\lambda^0_{\bar{n}}$ of the problem \eqref{probv_1^0}-\eqref{bcv_1^01}, the approximate eigenvalue $\almosteigenval_{\bar{n}}$ is
     \begin{equation}
     \label{approeigenval}
     (1+\lambda^0_{\bar{n}} + \varepsilon^{2m}\lambda^\prime_{\bar{n}})^{-1},
     \end{equation}
where $\lambda^\prime_{\bar{n}}$ is the asymptotic correction \eqref{singleeigenv} and 
the approximate eigenfunction $\almosteigenfun_{\bar{n}} = (\mathfrak{U}^\varepsilon_{0,\bar{n}}, \mathfrak{U}^\varepsilon_{1,\bar{n}})$ is defined by
     \begin{equation}
     \label{approeigenfnc}
      (\varepsilon^{2m}c_{0,\bar{n}}+ \varepsilon u^\prime_{0,\bar{n}}+\varepsilon^{2-2m} \fu^\varepsilon_{0,\bar{n}}, \hspace{0.2cm} u^0_{1,\bar{n}} +\varepsilon^{2m} u^\prime_{1,\bar{n}} + \varepsilon \fu_{1,\bar{n}}+\varepsilon^{2-2m}\su_{1,\bar{n}}),
     \end{equation}
where $c_{0,\bar{n}}$ is given by \eqref{constant}, $u^\prime_{0,\bar{n}}$ is the solution to the problem \eqref{pbv'_0}, $u^0_{1,\bar{n}}$ solves the limit problem \eqref{probv_1^0}-\eqref{bcv_1^01} and $u^\prime_{1,\bar{n}}$ is characterized by the problem \eqref{pbv'_1}-\eqref{bcv'}. The arbitrary (but fixed) functions $\fu_{1,\bar{n}}, \su_{1,\bar{n}}$ in $H^1(\ballone)$ are such that 
    \begin{equation*}
    \fu_{1,\bar{n}}(x) = u^\prime_{0,\bar{n}}(x), \qquad \su_{1,\bar{n}}(x) = \fu^\varepsilon_{0,\bar{n}} (x),\qquad x\in\boundzero,
    \end{equation*}
and $\fu^\varepsilon_{0,\bar{n}}$ is the solution to the Neumann problem for the Helmholtz operator 
    \begin{align}
    -\laplace\fu^\varepsilon_{0,\bar{n}} (x) -\varepsilon^{1-2m}\lambda^0_{\bar{n}}\fu^\varepsilon_{0,\bar{n}}(x)&= \lambda^0_{\bar{n}}u'_{0,\bar{n}}(x), \qquad x\in \ballzero, \label{prou''}\\
    \normalderzero \fu^\varepsilon_{0,\bar{n}}(x)&=0, \hspace{1.9cm} x\in\boundzero. \label{bcu''}
    \end{align} 
Denoting by $L^2_{\bot}(\ballzero)$ the subspace $\{u\in L^2(\ballzero)\hspace{0.02cm}:\hspace{0.02cm} \int_{\ballzero} u(x)dx=0  \}$ and setting $H^2_{\bot}(\ballzero) = H^2(\ballzero)\cap L^2_{\bot}(\ballzero)$, the Neumann Laplacian 
$\laplace : H^2_{\perp}(\ballzero)\rightarrow L^2_{\perp}(\ballzero) $ is an isomorphism. Consequently, for small $\varepsilon>0$ the mapping $-\laplace-\varepsilon^{1-2m}\lambda^0_{\bar{n}}\text{Id}$ is also an isomorphism, i.e. $\fu_{0,\bar{n}}^\varepsilon$ is the unique solution to the problem \eqref{prou''}-\eqref{bcu''} (cf., e.g., \cite[Theorem 3.6.1]{HPC05}). Furthermore, the estimate 
     \begin{equation*}
     \|\fu_{0,\bar{n}}^\varepsilon\|_{H^2_{\perp}(\ballzero)}\leq c\lambda^0_n\|u'_{0,\bar{n}}\|_{L^2_{\perp}(\ballzero)}
     \end{equation*}
holds, where the constant $c$ is independent of the parameter $\varepsilon$. 

\noindent If $\lambda^0_{\bar{n}}$ is a multiple eigenvalue (cf. \eqref{multipleig}) and $\lambda^\prime_{\bar{n}}$ in \eqref{approeigenval} is given by \eqref{multipleeigenv}, then the functions $u^0_{1,j}, c_{0,j}, u^\prime_{1,j}$ in  \eqref{approeigenfnc} are replaced with $V^0_{1,j}, v^0_{0,j}$ defined by the formulas \eqref{lincom}, \eqref{c_{0,k}} and the solution $V^\prime_{1,j}$ to the problem \eqref{multiple}-\eqref{multiple2} for any $j=\bar{n},\cdots,\bar{n}+\tau-1$. \\The almost eigenfunction $\almosteigenfun_{\bar{n}}$ belongs to Hilbert space $\newspace$ but in generally it does not satisfy the normalization condition. Then, we apply Lemma \ref{Visiklemma} with $ \|\almosteigenfun_{\bar{n}}\|^{-1}_{\newspace}\almosteigenfun_{\bar{n}}\in\newspace$. Note that for sufficiently small $\varepsilon$ the estimate
      \begin{equation}
      \label{normvisik}
      \|\almosteigenfun_{\bar{n}}\|_{\newspace} \geq\frac{1}{2},
      \end{equation}
follows from formula \eqref{estnormalmeig}. Indeed, 
the inner product 
      \begin{align}
      \langle \almosteigenfun_i, \almosteigenfun_j\rangle_{\varepsilon} & = (\grad \mathfrak{U}^\varepsilon_{1,i},\hspace{0.2cm} \grad\mathfrak{U}^\varepsilon_{1,j}  )_{\ballone} +\varepsilon^{-1}(\grad\mathfrak{U}^\varepsilon_{0,i},\hspace{0.2cm}  \grad\mathfrak{U}^\varepsilon_{0,j})_{\ballzero}+ (\mathfrak{U}^\varepsilon_{1,i},\hspace{0.2cm} \mathfrak{U}^\varepsilon_{1,j})_{\ballone}\notag\\
      &\quad + \varepsilon^{-2m} (\mathfrak{U}^\varepsilon_{0,i},\hspace{0.2cm} \mathfrak{U}^\varepsilon_{0,j})_{\ballzero}\notag\\
      &= (\grad u^0_{1,i}, \grad u^0_{1,j})_{\ballone} + (u^0_{1,i}, u^0_{1,j})_{\ballone} + O(\varepsilon^{2m})\notag\\
     &= (1+\lambda^0_p)(u^0_{1,p}, u^0_{1,q})_{\ballone} + O(\varepsilon^{2m})\notag\\
      &= (1+\lambda^0_i)\delta_{ij} + O(\varepsilon^{2m}),\hspace{2cm} i,j=1,2,\dots\label{estnormalmeig}
      \end{align}

\noindent where the last equality is due to orthonormalization conditions \eqref{orthnormv^0_1}. Note that $O(\varepsilon^{2m})$ contains the terms listed below multiplied by some power of $\varepsilon$ and they can be easily estimated:
      \begin{align*}
      \varepsilon^{2m} &: (\grad u^0_{1,i}, \grad u^0_{1,j})_{\ballone}  +(\grad u^\prime_{1,i}, \grad u^0_{1,j})_{\ballone} + (u^0_{1,i},  u^\prime_{1,j})_{\ballone} + ( u^\prime_{1,i},  u^0_{1,j})_{\ballone}; \\
      \varepsilon^{4m} &: (\grad u'_{1,i},\grad u'_{1,j})_{\ballone}+ ( u'_{1,i}, u'_{1,j})_{\ballone}; \qquad \varepsilon^{4-6m}:(\fu^\varepsilon_{0,i}, \fu^\varepsilon_{0,j})_{\ballzero};\\ 
      \varepsilon \hspace{0.4cm}&:(\grad u^0_{1,i}, \grad\fu_{1,j})_{\ballone} +(\grad \su_{1,i}, \grad u^0_{1,j})_{\ballone}+ (\grad u^\prime_{0,i}, \grad u^\prime_{0,j})_{\ballzero} \\
    &\quad  + ( u^0_{1,i}, \fu_{1,j})_{\ballone}+ (\fu_{1,i}, u^0_{1,j})_{\ballone};\\
      \varepsilon^{2m+1} &: (\grad u^\prime_{1,i} , \grad\fu_{1,j})_{\ballone} + (\grad \fu_{1,i} , \grad\fu_{1,j})_{\ballone} + (u^\prime_{1,i} , \fu_{1,j})_{\ballone} + (\su_{1,i} , u^\prime_{1,j})_{\ballone}; \\
     \varepsilon^{2-2m} &: (\grad u^0_{1,i}, \grad\su_{1,j})_{\ballone}  + (\grad \su_{1,i}, \grad u^0_{1,j})_{\ballone} + (\grad u^\prime_{0,i}, \grad \fu^\varepsilon_{0,j})_{\ballzero}  \\
      &\quad  +  (\grad\fu^\varepsilon_{0,i}, \grad u^\prime_{0,j})_{\ballzero}+( u^0_{1,i}, \su_{1,j})_{\ballone} + ( \su_{1,i}, u^0_{1,j})_{\ballone} + ( u^\prime_{0,i}, u^\prime_{0,j})_{\ballzero};\\
      \varepsilon^{2} &:  (\grad u^\prime_{1,i} , \grad\su_{1,j})_{\ballone}  + (\grad \fu_{1,i} , \grad\fu_{1,j})_{\ballone}   +(\grad \su_{1,i} , \grad u^\prime_{1,j})_{\ballone} \\
      &\quad+(u^\prime_{1,i} , \su_{1,j})_{\ballone}+(\fu_{1,i} , \fu_{1,j})_{\ballone} + (\su_{1,i} , \su_{1,j})_{\ballone}; \\
       \varepsilon^{3-4m} &: (\grad \fu^\varepsilon_{0,i}, \grad \fu^\varepsilon_{0,j})_{\ballzero}  +(u^\prime_{0,i},  \fu^\varepsilon_{0,j})_{\ballzero} + (\fu^\varepsilon_{0,i},  u^\prime_{0,j})_{\ballzero};\\
      \varepsilon^{4-4m} &: (\grad \fu_{1,i}, \grad\su_{1,j} )_{\ballone} +(\grad \su_{1,i}, \grad\fu_{1,j} )_{\ballone}+( \fu_{1,i}, \su_{1,j} )_{\ballone} + ( \su_{1,i}, \fu_{1,j} )_{\ballone};\\
      \varepsilon^{4-4m} &:(\grad\su_{1,i}, \grad\su_{1,j})_{\ballone} +  (\su_{1,i}, \su_{1,j})_{\ballone}.  
     \end{align*}
Consequently, we obtain

\noindent 
      \begin{align*}
      \delta^{\varepsilon}_{\bar{n}} &=  \|\almosteigenfun_{\bar{n}}\|^{-1}_{\newspace}\|\visikop \almosteigenfun_{\bar{n}} -\approxeigenv_{\bar{n}}\almosteigenfun_{\bar{n}}\|_{\newspace}\\
      &=\|\almosteigenfun_{\bar{n}}\|^{-1}_{\newspace}\sup_{\substack{W_{\varepsilon}\in\newspace\\\|W_{\varepsilon}\|_{\newspace}=1}} |\langle\visikop \almosteigenfun_{\bar{n}} -\approxeigenv_{\bar{n}}\almosteigenfun_{\bar{n}}, W^{\varepsilon}\rangle_{\varepsilon}|\\
      &=\|\almosteigenfun_{\bar{n}}\|^{-1}_{\newspace}(\almosteigenval_{\bar{n}})^{-1}\sup_{\substack{W_{\varepsilon}\in\newspace\\\|W_{\varepsilon}\|_{\newspace}=1}} |(\almosteigenval_{\bar{n}})^{-1}\langle\visikop \almosteigenfun_{\bar{n}}, W^{\varepsilon}\rangle_{\varepsilon}- \langle\almosteigenfun_{\bar{n}}, W^{\varepsilon}\rangle_{\varepsilon}|\\
      &\leq c\sup_{\substack{W_{\varepsilon}\in\newspace\\\|W_{\varepsilon}\|_{\newspace}=1}} |(\almosteigenval_{\bar{n}})^{-1}\langle\visikop \almosteigenfun_{\bar{n}}, W^{\varepsilon}\rangle_{\varepsilon}- \langle\almosteigenfun_{\bar{n}}, W^{\varepsilon}\rangle_{\varepsilon}|,
      \end{align*}
where in the last inequality we used \eqref{normvisik} and $(\almosteigenval_{\bar{n}})^{-1}\geq 1$. Now, we focus only on the absolute value. Using formulas \eqref{newinnerprod} and \eqref{defnewop}, we find
    \begin{align}
    |(\almosteigenval_{\bar{n}})^{-1}\langle\visikop \almosteigenfun_{\bar{n}} ,W^{\varepsilon}\rangle_{\varepsilon}- \langle\almosteigenfun_{\bar{n}}, W^{\varepsilon}\rangle_{\varepsilon}| & 
    =| J_0 +\varepsilon^{2m} J_1 + \varepsilon^{1-2m}J_2+ \varepsilon^{4m}\lambda^\prime_{\bar{n}}(u^\prime_{1,\bar{n}},W^{\varepsilon}_1)_{\ballone} \notag\\
    &\quad + \varepsilon^{2-4m}\lambda^0_{\bar{n}}(\fu^\varepsilon_{0,\bar{n}} ,W^{\varepsilon}_0)_{\ballzero} +\varepsilon J_3 +\varepsilon^{1+2m} \lambda^\prime_{\bar{n}}  (\fu_{1,\bar{n}},W^{\varepsilon}_1)_{\ballone} \notag\\
    &\quad \varepsilon^{2-2m}J_4 + \varepsilon^2\lambda^\prime_{\bar{n}} (\su_{1,\bar{n}},W^{\varepsilon}_1)_{\ballone}|.\label{firsthp}
    \end{align}
Here 
    \begin{align*}
    J_0 &=\lambda^0_{\bar{n}}(u^0_{1,\bar{n}},W^{\varepsilon}_1)_{\ballone}+\lambda^0_{\bar{n}}(c_{0,\bar{n}} ,W^{\varepsilon}_0)_{\ballzero} -(\grad u^0_{1,\bar{n}}, \grad W^{\varepsilon}_1)_{\ballone} - (\grad u^\prime_{0,\bar{n}}, \grad W^{\varepsilon}_0)_{\ballzero};\\
    J_1 &=\lambda^\prime_{\bar{n}}(c_{0,\bar{n}}, W^{\varepsilon}_0)_{\ballzero}+ \lambda^0_{\bar{n}}(u^\prime_{1,\bar{n}} ,W^{\varepsilon}_1)_{\ballone} + \lambda^\prime_{\bar{n}}(u^0_{1,{\bar{n}}},W^{\varepsilon}_1)_{\ballone}- (\grad u^\prime_{1,\bar{n}}, \grad W^{\varepsilon}_1)_{\ballone};\\
    J_2 &= \lambda^0_{\bar{n}}(u^\prime_{0,\bar{n}} ,W^{\varepsilon}_0)_{\ballzero}-(\grad\fu^\varepsilon_{0,\bar{n}},\grad W^{\varepsilon}_0)_{\ballzero};\\
    J_3&=\lambda^0_{\bar{n}}(\fu_{1,\bar{n}}, W^{\varepsilon}_1)_{\ballone} +\lambda^\prime_{\bar{n}}(u'_{0,\bar{n}}, W^{\varepsilon}_0)_{\ballzero}-(\grad \fu_{1,\bar{n}}, \grad W^{\varepsilon}_1)_{\ballone};\\
    J_4&=  \lambda^0_{\bar{n}}(\su_{1,\bar{n}}, W^{\varepsilon}_1)_{\ballone} +\lambda^\prime_{\bar{n}}(\fu^\varepsilon_{0,\bar{n}}, W^{\varepsilon}_0)_{\ballzero}-(\grad \su_{1,\bar{n}}, \grad W^{\varepsilon}_1)_{\ballone}.
    \end{align*}

\noindent Integrating by parts the problems \eqref{probv_1^0}-\eqref{bcv_1^01}, \eqref{pbv'_0} and \eqref{pbv'_1}-\eqref{bcv'},  the expression under the modulus sign on the right-hand side of \eqref{firsthp} becomes
          \begin{align}
          |(\almosteigenval_{\bar{n}})^{-1}\langle\visikop \almosteigenfun_{\bar{n}} ,W^{\varepsilon}\rangle_{\varepsilon}- \langle\almosteigenfun_{\bar{n}}, W^{\varepsilon}\rangle_{\varepsilon}|& = |\varepsilon^{2m}J'_1+\varepsilon^{1-2m}J_2+\varepsilon^{4m}\lambda^\prime_{\bar{n}}(u^\prime_{1,\bar{n}}, W^{\varepsilon}_1)_{\ballone} \notag\\
          &\quad+\varepsilon^{2-4m}\lambda^0_{\bar{n}}(\fu^\varepsilon_{0,\bar{n}}, W^{\varepsilon}_0)_{\ballzero}+\varepsilon J_3+ \varepsilon^{1+2m}\lambda^\prime_{\bar{n}}(\fu_{1,\bar{n}}, W^{\varepsilon}_1)_{\ballone}\notag\\
          &\quad +\varepsilon^{2-2m}J_4 + \varepsilon^2\lambda^\prime_{\bar{n}} (\su_{1,\bar{n}},W^{\varepsilon}_1)_{\ballone}|,\notag
          \end{align}
with 
      $
      J'_1 = \lambda^\prime_{\bar{n}}(c_{0,\bar{n}}, W^{\varepsilon}_0)_{\ballzero}-(\normalderzero u^\prime_{1,\bar{n}}, W^{\varepsilon}_1)_{\boundzero}.
      $
Note that $\varepsilon^{1-2m}J_2+\varepsilon^{2-4m}\lambda^0_{\bar{n}}(\fu^\varepsilon_{0,\bar{n}}, W^{\varepsilon}_0)_{\ballzero}=0$ due to the fact that $\fu^\varepsilon_{0,\bar{n}}$ can be written as Neumann series (cf. \cite[Theorem 3.6.1]{HPC05}).

\noindent Moreover, the definition of the inner product \eqref{newinnerprod} in the Hilbert space $\newspace$ yields the following estimates of the classical norm in $L^2(\Omega_i),$ $i=0,1$ 
      \begin{equation*}
      \|W^{\varepsilon}_1\|_{\ballone}\leq\|W\|_{\newspace}, \quad \|\grad W^{\varepsilon}_1\|_{\ballone}\leq \|W^{\varepsilon}\|_{\newspace}, \quad \|W^{\varepsilon}\|_{\ballzero}\leq \varepsilon^{m}\|W^{\varepsilon}\|_{\newspace}.
      \end{equation*}
Finally,
     \begin{align*}
     \delta^{\varepsilon}_{\bar{n}} 
     &\leq C_1\varepsilon^{3m} + C_2\varepsilon +C_3\varepsilon^{4m}+ C_4\varepsilon^{1+m}+C_5\varepsilon^{1+2m}+ C_6\varepsilon^{2-2m} +C_7\varepsilon^{2-m}+ C_8\varepsilon^2\leq C\varepsilon^{\gamma},
     \end{align*} 
where $\gamma= \min\{3m, 1\}$. Then, the first part of Lemma \ref{Visiklemma} implies that there exists at least one eigenvalue $k^{\varepsilon}_{n}$ of  $\visikop$ such that 
    \begin{equation}
    \label{f2}
    |\almosteigenval_{\bar{n}} - k^{\varepsilon}_{n}|\leq C\varepsilon^{\gamma}.
    \end{equation}
Bearing in mind Lemma \ref{lemma}, the inequality \eqref{f2} can be written as
     \begin{equation}
     \label{justest}
     |\lambda^{\varepsilon}_{n} - \lambda^0_{\bar{n}} -\varepsilon^{2m}\lambda^\prime_{\bar{n}}| \leq C |1+\eigenv_{n}||1+\lambda^0_{\bar{n}}+\varepsilon^{2m}\lambda^\prime_{\bar{n}}| \varepsilon^{\gamma}\leq C_n\varepsilon^\gamma \qquad\forall\varepsilon\in (0, \varepsilon_N).
     \end{equation}
In order to show \eqref{justasy} and to conclude the proof of Theorem \ref{thmmain}, we must check that the indices $n$ and $\bar{n}$ in the inequality \eqref{justest} coincide. To this end, we will apply the second part of Lemma \ref{Visiklemma}.\\Assume that $\lambda^0_{\bar{n}}$ is an eigenvalue of multiplicity $\tau\geq 2$ of the problem \eqref{probv_1^0}-\eqref{bcv_1^01}. We can associate $\tau$ copies of almost eigenfunctions $\almosteigenfun_p$, given by \eqref{approeigenfnc}, and introduce
     \begin{equation*}
     \label{seconddelta}
     \delta^\prime_{\bar{n}\varepsilon} = T\max\{\delta_{\bar{n}}^{\varepsilon}, \cdots,\delta_{\bar{n}+\tau-1}^{\varepsilon}\},
     \end{equation*}
where $T$ is large and fixed (independent of $\varepsilon$). The second part of Lemma \ref{Visiklemma} gives the normalized columns $\column_{J^{\varepsilon}}, \cdots, \column_{J^{\varepsilon}+K^{\varepsilon}-1}$ verifying the inequality 
      \begin{equation}
      \label{inalmoeigenfun}
      \biggl\|\almosteigenfun_p - \sum_{j=J^{\varepsilon}}^{J^{\varepsilon}+K^{\varepsilon}-1}  \column_j\funcnewop_j\biggr\|_{\newspace}\leq \frac{2}{T}, \qquad p=\bar{n}, \cdots, \bar{n}+\tau-1.
      \end{equation}
We aim to show that in the closure of a $\delta^\prime_{\bar{n}\varepsilon}$-neighbourhood of the point $\almosteigenval_{\bar{n}}$ there are at least $\tau$ eigenvalues  of the operator $\visikop$, i.e. in the inequality \eqref{inalmoeigenfun} the number $K^{\varepsilon}$ is such that $K^{\varepsilon}\geq\tau$. 
The equality \eqref{estnormalmeig} implies the estimate
    \begin{equation}
    \label{stimaeigenfun}
    |\langle \almosteigenfun_p, \almosteigenfun_q\rangle_{\varepsilon} - (1+\lambda^0_p)\delta_{p,q}|\leq C\varepsilon^{2m}, \qquad p,q=\bar{n},\dots,\bar{n}+\tau-1.
    \end{equation}
Set 
    \begin{equation*}
    S^{\varepsilon}_p =\sum_{j=N^{\varepsilon}}^{N^{\varepsilon} + K^{\varepsilon}-1}\column_j\funcnewop_j,\qquad p=\bar{n},\dots,\bar{n}+\tau-1.
    \end{equation*}
In view of the estimate \eqref{stimaeigenfun} and orthonormalization condition \eqref{orthnor} of $\funcnewop$, we find
     \begin{align*}
     \biggl|\sum_{j=N^{\varepsilon}}^{N^{\varepsilon}+K^{\varepsilon}-1} \column_j\overline{\columnq_j}- (1+\lambda^0_{\bar{n}})\delta_{p,q}  \biggr| &= \biggl| \langle  \sum_{j=N^{\varepsilon}}^{N^{\varepsilon}+K^{\varepsilon}-1} a^{p\varepsilon}_ju^{\varepsilon}_j, \sum_{j=N^{\varepsilon}}^{N^{\varepsilon}+K^{\varepsilon}-1} a^{q\varepsilon}_ju^{\varepsilon}_j\rangle_{\varepsilon} - (1+\lambda^0_{\bar{n}})\delta_{p,q}  \biggr| \\
     &= |\langle S^{\varepsilon}_p,S^{\varepsilon}_q\rangle_{\varepsilon} - \langle S^{\varepsilon}_p, \almosteigenfun_q\rangle_{\varepsilon} + \langle S^{\varepsilon}_p, \almosteigenfun_q\rangle_{\varepsilon} - \langle \almosteigenfun_p, \almosteigenfun_q\rangle_{\varepsilon} \\
     &\quad  +\langle \almosteigenfun_p, \almosteigenfun_q\rangle_{\varepsilon} -(1+\lambda^0_{\bar{n}})\delta_{p,q}  |\\
     &\leq |\langle S^{\varepsilon}_p,S^{\varepsilon}_q-\almosteigenfun_q\rangle_{\varepsilon}  + \langle S^{\varepsilon}_p-\almosteigenfun_p, \almosteigenfun_q\rangle_{\varepsilon}  | + |\langle\almosteigenfun_p, \almosteigenfun_q\rangle_{\varepsilon} -(1+\lambda^0_{\bar{n}})\delta_{p,q}|\\
     &\leq \|S^{\varepsilon}_p \|_{\varepsilon}\|S^{\varepsilon}_q-\almosteigenfun_q\|_{\varepsilon} +
     \|S^{\varepsilon}_p-\almosteigenfun_p\|_{\varepsilon}\|\almosteigenfun_q \|_{\varepsilon} +O(\varepsilon^{2m})
     \\
     &\leq C(\varepsilon^{2m} + \frac{4}{T}).
     \end{align*} 
We conclude that for sufficiently large $T$, the columns $\column$ turn out to be almost orthonormalized that is possible only if $K^{\varepsilon}\geq\tau$. Consequently, for $\tau$-multiple eigenvalue $\lambda^0_{\bar{n}}$ there are at least $\tau$ distinct eigenvalues $k^{\varepsilon}_j, \dots,k^{\varepsilon}_{j+\tau-1} $ of the operator $\newspace$ such that 
   \begin{equation}
   \label{almosteig}
   |\almosteigenval_{\bar{n}} - k^{\varepsilon}_j|\leq Tc_n\varepsilon^{\gamma}, \qquad j=n,\dots,n+\tau-1.
   \end{equation}
   \begin{remark}
   \label{proofestapriori}
   The formula \eqref{almosteig} leads to check the inequality \eqref{estieigen}. Indeed, for each eigenvalue $\lambda^0_n$ of the sequence \eqref{seqeigen}, one can associate the eigenvalue $\eigenv_{M(n)}$ such that $\eigenv_{M(n)}\leq \lambda^0_n + C_n\varepsilon^{\gamma}$. 
    Moreover $M(n_1)<M(n_2)$ if $n_1<n_2$. Consequently $n<M(n)$ and
         \begin{equation*}
         \eigenv_n\leq\eigenv_{M(n)}\leq \lambda^0_n+C_n\varepsilon^{2m}\leq \lambda^0_n + C_n,
         \end{equation*}
   which implies \eqref{estieigen}.
   \end{remark}
\noindent In order to conclude the proof of Theorem \ref{thmmain}, we will check that the eigenvalues $\eigenv_n, \dots, \eigenv_{n+\tau-1}$ of the sequence \eqref{spectrum} verify the estimate \eqref{justasy}. In other words, the equality $\bar{n}=n$ 
holds true in \eqref{justest}.


\noindent Let $\bar{n}$ be some index such that $\lambda^0_{\bar{n}}$ is $\tau$-multiple eigenvalue of the problem \eqref{probv_1^0}-\eqref{bcv_1^01}. If we assume $M(\bar{n}+\tau-1)>\bar{n}+\tau-1$, then there exists an eigenvalue $\lambda^{\varepsilon}_{J^{\varepsilon}}$ with $J^{\varepsilon}\leq M(\bar{n}+\tau-1)$,  such that
      \begin{equation*}
      \lambda^{\varepsilon}_{J^{\varepsilon}}\leq \lambda^0_{\bar{n}+\tau-1} + \varepsilon^{2m}\lambda^\prime_{\bar{n}+\tau-1}+C\varepsilon^{\gamma} < \lambda^0_{\bar{n}+\tau}.
      \end{equation*}
From Lemma \ref{lemma}, the eigenpair $(\lambda^{\varepsilon}_{J^{\varepsilon}}, U^{\varepsilon}_{J^{\varepsilon}})$ converges to eigenelement $(\lambda^{\ast}_{J^0}, U^{\ast})$ of the limit problem \eqref{probv_1^0}-\eqref{bcv_1^01}, where $U^{\ast}$ is orthogonal to  $U^{0}_{1}, \cdots, U^{0}_{\bar{n}+\tau-1}$ in $L^2(\ballone)$. This last claim is invalid, because of the min-max principle (see, e.g. \cite{BS87}) 
    \begin{equation*}
    \lambda^{\ast}_{J^0} =\max_{\substack{E\subset H^1_0(\ballone)\\ dim E=J^{\varepsilon}-1}} \min_{\substack{v\in E^{\perp}\\v\neq 0}} \frac{\|\grad v\|_{L^2(\ballone)}}{\|v\|_{L^2(\ballone)}}
    \end{equation*}
and the inequality $\lambda^{\ast}_{J^0}< \lambda^0_{\bar{n}+\tau}$. Thus, $\bar{n}=n$ and Theorem \ref{thmmain} is proved.

\section{Asymptotic expansion for $\boldsymbol{m< 0}$} 
We briefly describe the behaviour of the eigenpairs of the problem \eqref{uone}-\eqref{bduzero1} for $m<0$. The proof of Theorem \ref{thmmain}  uses the same argument as in Section 3. 
We consider an asymptotic expansion for an eigenvalue $\eigenv$ and the corresponding eigenfunction $\{\uzero,\uone\}$ of the form
       \begin{align}
       \eigenv &= \lambda^0 + \varepsilon\lambda^\prime +\cdots, \label{regasyeig}\\
        \uzero (x)&= u^0_0(x) +\varepsilon u^\prime_0(x) +\cdots,\qquad x\in\ballzero,\label{regasymu0}\\
         \uone(x) &= u^0_1(x)+\varepsilon u^\prime_1(x)+\cdots, \qquad x\in\ballone.\label{regasymu1}
        \end{align} 
The formulas \eqref{regasyeig}-\eqref{regasymu0} mean that the eigenpair $(\eigenv,\{\uzero,\uone\})$ is expected to depend on the parameter $\varepsilon$ continuously. By replacing the expansions \eqref{regasyeig}-\eqref{regasymu0} in the problem \eqref{uone}-\eqref{bduzero1} and collecting the coefficients of the same powers of $\varepsilon$, the leading term in \eqref{regasymu0} is a solution to the homogenuous Neumann problem \eqref{pbv_0^0}, i.e. a constant $c_0$. The correction term $u^\prime_0$ defined up to  an additive constant satisfies 
    \begin{align*}
    -\laplace u^\prime_0(x) =0, \hspace{0.3cm}x\in\ballzero,\hspace{1cm}
    \normalderzero u^\prime_0(x) = \normalderzero u_1^0(x), \hspace{0.3cm}x\in\boundzero.
    \end{align*} 
The compatibility condition reads
   \begin{equation}
   \label{compcond}
   \int_{\boundzero} \normalderzero u_1^0 (x)ds_x =0.
   \end{equation}    
The leading terms $\lambda^0_n$ and $u^0_1$ in the ans\"{a}tze \eqref{regasyeig},\eqref{regasymu1} are obtained from the spectral problem
   \begin{align}
   -\laplace u^0_1(x) &= \lambda^0u^0_1(x), \hspace{0.9cm} x\in\ballone,\label{leapbm<0} \\
    \normalderone u^0_1(x) = 0, \hspace{0.3cm}x\in\boundone,\hspace{0.4cm}&\hspace{0.6cm}
    u^0_1(x) = u^0_0(x), \hspace{0.3cm}x\in\boundzero,\label{leabcm<0}
   \end{align} 
along with the integral condition \eqref{compcond}.  To write down the variational formulation of the problem \eqref{leapbm<0}-\eqref{leabcm<0}, we set 
$H^1_{\bullet}(\ballone, \boundzero)$ as the subspace of functions in $H^1(\ballone)$ with a constant trace on the boundary $\boundzero$. For $\varphi\in H^1_{\bullet}(\ballone, \boundzero)$, the Green formula provides
      \begin{align*}
      -\int_{\ballone}\laplace u^0_1(x)\varphi(x)dx &=  \int_{\ballone} \grad u^0_1(x)\grad \varphi(x)dx - \int_{\boundzero} \normalderzero u^0_1(x)\varphi(x)ds_x \\
      &= \lambda^0\int_{\ballone}u^0_1(x)\varphi(x)dx.
      \end{align*}
Since $\varphi$ is a constant on the boundary $\boundzero$, it follows that 
     \begin{equation*}
     \int_{\boundzero} \normalderzero u^0_1(x)\varphi(x)ds_x = \text{const}\int_{\boundzero} \normalderzero u^0_1(x)ds_x =0.
     \end{equation*}
As a consequence, the variational formulation reads: find the eigenvalue $\lambda^0\in\mathbb{R}$ and the corresponding eigenfunction $u^0_1\in H^1_{\bullet}(\ballone, \boundzero)\setminus\{0\} $ such that
   \begin{equation}
   \label{weakformm<0}
   (\grad u^0_1, \grad \varphi)_{\ballone} = \lambda^0(u^0_1, \varphi)_{\ballone}\qquad \forall \varphi\in H^1_{\bullet}(\ballone, \boundzero).
   \end{equation} 
Note that the integral equality \eqref{weakformm<0} implies the condition \eqref{compcond}. The problem \eqref{weakformm<0} admits the eigenvalues sequence \eqref{seqeigen}. The corresponding eigenfunctions $u^0_1$ are orthonormalized in $L^2(\ballone)$.\\
The correction term $u^\prime_1$ verifies the problem 
        \begin{align}
        -\laplace u^\prime_1(x) - \lambda^0 u^\prime_1(x) &= \lambda^\prime u^0_1(x), \hspace{0.5cm}x\in\ballone,\label{corprm<0}\\
        \normalderone u^\prime_1(x) =0, \hspace{0.3cm}x\in\boundone,\hspace{0.3cm}&\hspace{0.9cm}
        u^\prime_1(x) = u^\prime_0(x),\hspace{0.3cm} x\in\boundzero.\label{corbcm<0}
        \end{align}
Before computing  $\lambda^\prime$, we investigate term of higher order in the asymptotic \eqref{regasymu0}. We assume that $-2m>1$. The term $u''_0$ of order $\varepsilon^2$ solves the problem 
         \begin{align*}
         -\laplace u''_0(x) =0, \hspace{0.3cm}x\in\ballzero,\hspace{1cm}
          \normalderzero u''_0(x) = \normalderzero u_1^\prime(x), \hspace{0.3cm}x\in\boundzero.
          \end{align*}
The compatibility condition reads as
      \begin{equation}
      \label{com2m1}
      \int_{\boundzero} \normalderzero u_1^\prime(x) ds_x =0.
      \end{equation}    
When $-2m=1$ the compatibility condition becomes inhomegeneous, i.e. 
       \begin{equation}
       \label{com2m2}
       \int_{\boundzero} \normalderzero u_1^\prime(x) ds_x = \lambda^0 c_0|\ballzero|,
       \end{equation} 
since $u''_0$ solves the problem 
          \begin{align}
         -\laplace u''_0(x) =\lambda_0u^0_0(x), \hspace{0.3cm}x\in\ballzero,\hspace{1cm}
          \normalderzero u''_0(x) = \normalderzero u_1^\prime(x), \hspace{0.3cm}x\in\boundzero.\label{u''}
          \end{align}
In the last case, i.e. $-2m<1$, the term $u''_0$ has order $\varepsilon^{-2m+1}$ and it solves the problem \eqref{u''}, yielding the compatibility condition \eqref{com2m2}.\\ In case of simple eigenvalue $\lambda^0_n$ and $-2m>1$, the Fredholm alternative leads to the following expression for the correction term $\lambda^\prime_n$:
      \begin{align}
      \lambda^\prime_n= -(\normalderzero u^0_1, u^\prime_1)_{\boundzero} \label{lambda'1} = -(\normalderzero u^\prime_0, u^\prime_0)_{\boundzero} = - (\grad u^\prime_0, \grad u^\prime_0)_{\ballzero} = -\|\grad u^\prime_0\|^2_{L^2(\ballzero)}.
      \end{align}
If $-2m\leq 1$, the term $\lambda^\prime$ becomes
      \begin{equation*}
      \lambda^\prime= -(\normalderzero u^\prime_1, u^0_1)_{\boundzero}-(\normalderzero u^0_1, u^\prime_1)_{\boundzero}  = -\lambda^0c_0^2|\ballzero| -\|\grad u^\prime_0\|^2_{L^2(\ballzero)},
      \end{equation*}
due to the compatibility condition  \eqref{com2m2}.\\
Suppose that $\lambda^0_n$ is a $\tau$-multiple eigenvalue. As in Section $2$, the leading terms of the asymptotics of the eigenfunctions $u^{\varepsilon}_{1,n}, \dots, u^{\varepsilon}_{1,n+\tau-1}$ are predicted in the form of linear combinations
   \begin{equation*}
    \label{leadmulteig}
   U^0_{1,j}(x) = a^j_nu^0_{1,n}(x) +\dots +a^j_{n+\tau-1}u^0_{1,n+\tau-1}(x), \qquad j=n,\dots, n+\tau-1,
   \end{equation*}
of the eigenfunctions $u^{0}_{1,n}, \dots, u^{0}_{1,n+\tau-1}$ of the limit problem \eqref{leapbm<0}-\eqref{leabcm<0}. The coefficients $a^j_n,\dots,a^j_{n+\tau-1}$ satisfy the orthonormalization condition \eqref{orthcondeigenv}. The first order corrector $U^\prime_{1,j}$ in \eqref{regasymu1} verifies the problem
   \begin{align*}
   -\laplace U^\prime_{1,j}(x)-\lambda^0_nU^\prime_{1,j}(x)&=\lambda^\prime_jU^0_{1,j}(x), \hspace{1.7cm}x\in\ballone,\\
   \normalderone U^\prime_{1,j}(x) =0, \hspace{0.3cm}x\in\boundone, \hspace{0.5cm}&\hspace{0.9cm}
   U^\prime_{1,j}(x)=\sum_{k=n}^{n+\tau-1} a^j_ku^\prime_{0,k}(x), \hspace{0.3cm}x\in\boundzero.
   \end{align*}
In the case $-2m>1$, from the Fredholm alternative and the formula \eqref{com2m1}, we get the $\tau$ compatibility conditions
    \begin{align*}
    \lambda^\prime_j(U^0_{1,j}, u^0_{1,q})_{\ballone} = (\normalderzero u^0_{1,q} , U^\prime_{1,j})_{\boundzero} = \sum_{k=n}^{n+\tau-1}a^j_k(\normalderzero u^\prime_{0,q} ,u^\prime_{0,j})_{\boundzero}= \sum_{k=n}^{n+\tau-1}a^j_k(\grad u^\prime_{0,j},\grad u^\prime_{0,q})_{\ballzero}.
    \end{align*}
It follows that
     \begin{align*}
     \lambda^\prime_ja^j_q = \sum_{k=n}^{n+\tau-1}a^j_k(\grad u^\prime_{0,j},\grad u^\prime_{0,q})_{\ballzero},
     \end{align*}
which can be written in the form of the linear system of $\tau$ algebraic equations 
   \begin{equation}
   \label{min1}
    Ga^j= \lambda^\prime_ja^j, \qquad j=n,\dots, n+\tau-1.
   \end{equation}
Here,  $G$ is the Gram matrix whose entries are 
   \begin{equation*}
   G_{i,j}= (\grad u^\prime_{0,i},\grad u^\prime_{0,j})_{\ballzero}, \qquad i,j=n,\dots,n+\tau-1.
   \end{equation*}
Since $G$ is a symmetric $\tau\times\tau$ matrix, its eigenvalues $\lambda^\prime_n,\dots,\lambda^\prime_{n+\tau-1}$ are real and positive. Indeed  the derivatives $\normalderzero u^0_{1,n}, \dots, \normalderzero u^0_{1,n+\tau-1}$ are linearly independent in $L^2(\boundzero)$. Otherwise, a linear combination 
     \begin{equation*}
     U(x)= \sum_{k=n}^{n+\tau-1} a_iu^0_{1,i}(x), \qquad x\in\ballone,
     \end{equation*}
satisfies the equation $-\laplace U(x) = \lambda^0U(x)$, $x\in\ballone$, and simultaneously two boundary conditions $U(x) = \text{const}$ and $\normalderzero U(x)=0$, $x\in\boundzero$. This is a contraddiction due to the theorem on strong unique continuation (e.g. cf. \cite{Z83}). Hence, $\grad u^\prime_{0,n}, \dots, \grad u^\prime_{0,n+\tau-1}$ are linearly independet in $L^2(\ballzero)^d$ and the matrix $G$ is positive-definite matrix. We emphasize that $\grad u^\prime_{0,i}$, $i=n,\cdots,n+\tau-1$, are defined uniquely, although $u^\prime_{0,i}$ are defined up to a constant.\\If $-2m\leq 1$, the Fredholm alternative and the expression \eqref{com2m2} yield to 
    \begin{align}
    \label{me}
    \lambda^\prime_j = -(\normalderzero U^\prime_{1,j}, u^0_{1,q})_{\boundzero}-(\normalderzero u^0_{1,q} , U^\prime_{1,j})_{\boundzero} = \lambda^0_n(u^0_{0,q})^2|\ballzero| -\sum_{k=n}^{n+\tau-1}a^j_k(\grad u^\prime_{0,j},\grad u^\prime_{0,q})_{\ballzero}.
    \end{align}
As far as the justification procedure is concerned, the estimate \eqref{justasy} of the Theorem \ref{thmmain} for $m<0$ is valid with $\alpha=0$, $\beta=1$, $\gamma=\min\{1-m,2\}$, $\lambda^0_n$ being an eigenvalue of the problem \eqref{leapbm<0}-\eqref{leabcm<0} and $\lambda'_n$ the correction term in \eqref{regasyeig}, given by formula  \eqref{lambda'1} for a simple eigenvalue and  by formulas \eqref{min1}-\eqref{me} for a multiple ones.

\section{Asymptotic expansion for $\boldsymbol{m=0}$}
If $m=0$, the ans\"{a}tze \eqref{regasyeig}-\eqref{regasymu0} are still correct. As a consequence, the problem \eqref{pbv_0^0} is satisfied by the leading term $u^0_0$. 
The main difference comes from the problem satisfied by the correction term $u^\prime_0$
    \begin{align*}
    -\laplace u^\prime_0(x) = \lambda^0u_0^0(x), \hspace{0.3cm}x\in\ballzero, \hspace{1cm}
     \normalderzero u^\prime_0 (x)= \normalderzero u^0_1(x), \hspace{0.3cm}x\in\boundzero,
     \end{align*}
whose compatibility condition reads as
      \begin{equation}
      \label{constm=0}
      c_0=\frac{1}{\lambda^0|\ballzero|}\int_{\boundzero} \normalderzero u^0_1ds_x.
      \end{equation}
The leading terms $\lambda^0$ and $u^0_1$ in \eqref{regasyeig} and\eqref{regasymu1} solve the problem \eqref{leapbm<0}-\eqref{leabcm<0} along with the integral condition \eqref{constm=0}. Therefore, the variational formulation reads as
    \begin{equation}
    \label{varidenm=0}
    (\grad u^0_1, \grad \varphi)_{\ballone} = \lambda^0 (u^0_1, \varphi)_{\ballzero}+ \lambda^0|\ballzero|\overline{u}^0_1\overline{\varphi},\qquad \forall \varphi\in H^1_{\bullet}(\ballone, \boundzero).
    \end{equation}
Here $\overline{u}$ denotes the constant trace of function $u\in H^1(\ballone)$ on the boundary $\boundzero$.  The problem \eqref{varidenm=0} admits discrete spectrum given by \eqref{seqeigen}.
The correction term $u^\prime_1$ satisfies the problem \eqref{corprm<0}-\eqref{corbcm<0}.
The claim of Theorem \ref{thmmain} is still true and the estimate \eqref{justasy} becomes 
     \begin{equation*}
     \label{estm=0}
     |\eigenv-\lambda^0_n-\varepsilon\lambda'_n|\leq C_N\varepsilon^{3/2}.
     \end{equation*}

\section{Asymptotic expansion for $\boldsymbol{m=1/2}$}
This case is discussed in more abstract setting in the textbook \cite[Chapter VII]{SHSP89}, but for the convenience of the reader a simple and independent proof is presented for the problem under consideration. The Helmholtz equation \eqref{uzero}  gets rid of the small parameter $\varepsilon$ 
      \begin{align*}
      -\laplace\uzero(x) &= \eigenv\uzero(x), \qquad x\in\ballzero. 
      \end{align*}
We perfom the replacement \eqref{vzero}, i.e. $v^\epsilon_0(x)=\varepsilon^{-1/2}\uzero(x)$ and $v^\varepsilon_1(x)=\uone(x)$. The asymptotics of eigenpairs $(\eigenv,\{\uzero,\uone\})$ take the form
    \begin{align*}
    \eigenv = \lambda^0 + \varepsilon^{1/2}\lambda^\prime +\cdots, \hspace{0.5cm}&\hspace{1cm}
    v^\epsilon_1(x) = v^0_1(x)+\varepsilon^{1/2} v^\prime_1(x)+\cdots, \qquad x\in\ballone,\\
    v^\epsilon_0 (x)=v^0_0(x) &+\varepsilon^{1/2} v^\prime_0(x) +\cdots,\qquad x\in\ballzero.
    \end{align*}
The essential difference with respect to the other cases is the presence of two spectral limit problems. In fact, the leading term $v^0_0$ is determined from the problem 
     \begin{align}
      -\laplace v^0_0(x) = \lambda^0 v^0_0(x), \hspace{0.3cm}x\in\ballzero,\hspace{1cm}
      \normalderzero v^0_0(x) =0 \hspace{1.5cm}x\in\boundzero.\label{lapm=1/2}
     \end{align}
The leading term $v^0_1$ solves the problem 
     \begin{align}
     -\laplace v^0_1(x) &= \lambda^0v^0_1(x), \hspace{0.9cm} x\in\ballone, \label{limprom12}\\
     \normalderone v^0_1(x) = 0,  \hspace{0.3cm}x\in\boundone,\hspace{0.3cm}&\hspace{1.3cm}
     v^0_1(x) = 0, \hspace{0.3cm}x\in\boundzero.\label{bclimprom12}
     \end{align}
The problem for the correction terms $v^\prime_0$ is
   \begin{align*}
   -\laplace v^\prime_0(x) -\lambda^0v^\prime_0(x) = \lambda^\prime v^0_0(x), \hspace{0.3cm}x\in\ballzero, \hspace{1cm}
   \normalderzero v^\prime_0(x) = \normalderzero v^0_1(x), \hspace{0.3cm} x\in\boundzero. 
   \end{align*}
The correction term $v^\prime_1$
is determined by the problem \eqref{pbv'_1}-\eqref{bcv'}.
Owing to the two limit problems, the procedure made for the convergence theorem must be slightly modified. We explain it briefly. \\According to the convergence \eqref{conveigenva} of eigenvalues $\eigenv_n$, the weak formulation \eqref{weakform} and the normalization condition \eqref{normalcond} with $m=1/2$, 
we deduce 
   \begin{equation*}
   \|\grad v^{\varepsilon}_{1,n}\|_{L^2(\ballone)} ^2+\varepsilon^{-1}\|\grad v^{\varepsilon}_{0,n}\|^2_{L^2(\ballzero)} \leq C_n.
   \end{equation*}
As in Section $3.1.1$, $v^{\varepsilon}_{0,n}$ converges to zero strongly in $H^1(\Omega_0)$ and hence in
$L^2(\Omega_0)$, while $v^{\varepsilon}_{1,n}$
converges to some $v^0_{1,\bar{n}}$ weakly in $H^1(\Omega_1)$ and strongly in $L^2(\ballone)$. If $v^0_{1,\bar{n}}\neq 0$, the continuity of trace operator ensures that $v^{\varepsilon}_{0,n}$ converges to $0$ in $L^{2}(\boundzero)$. Then the boundary condition \eqref{bduzero1} yields the strong convergence of  $v^{\varepsilon}_{1,n}$ to $0$ in $L^2(\boundzero)$,
i.e. $v^0_{1,\bar{n}}\in H^1_0(\Omega_1, \boundzero)$. Using the same arguments as in Section $3.1.1$, we deduce that  the leading terms $\lambda^0_{\bar{n}}$ and $v^0_{1,\bar{n}}$, with $v^0_{1,\bar{n}}\neq 0$, are characterized as the eigenelements of the spectral problem  \eqref{limprom12}-\eqref{bclimprom12}.\\
Assume, now, that $v^0_{1,\bar{n}}=0$. The previous arguments fail and we introduce the new normalization condition
     \begin{equation}
     \label{newnormcond}
     \|v^{\varepsilon}_{1,n}\|_{L^2(\ballone)} ^2+\varepsilon^{-1}\|v^{\varepsilon}_{0,n}\|^2_{L^2(\ballzero)} =\varepsilon^{-1}.
     \end{equation}
The weak formulation \eqref{weakform} implies the bound
    \begin{equation}
    \label{cond1}
    \|\grad v^{\varepsilon}_{1,n}\|_{L^2(\ballone)} ^2+\varepsilon^{-1}\|\grad v^{\varepsilon}_{0,n}\|^2_{L^2(\ballzero)} \leq C_n\varepsilon^{-1}.
    \end{equation}     
Multiplying the inequalities \eqref{newnormcond} and \eqref{cond1} by $\varepsilon$, the quantities $\|v^{\varepsilon}_{0,n}\|_{H^{1}(\ballzero)}$ and $\|\grad v^{\varepsilon}_{0,n}\|_{H^{1}(\ballzero)}$ are bounded and then  $v^{\varepsilon}_{0,n}$ converges weakly in $H^{1}(\ballzero)$ and strongly to $L^{2}(\ballzero)$ to some function $v^{0}_{0,\bar{n}}$. 
Moreover, the trace of $v^{\varepsilon}_{0,n}$ converges to the trace of $v^0_{0,\bar{n}}$
in $L^2(\boundzero)$. Finally, the limit passage as $\varepsilon\to 0$ in the weak formulation \eqref{weakform} leads to characterize $v^0_{0,\bar{n}}$ as the eigenfunction with associated eigenvalue $\lambda^0_{\bar{n}}$ of the problem \eqref{lapm=1/2} and the eigenfunctions $v^0_{0,\bar{n}}$ are normalized in $L^2(\ballzero)$. Indeed bearing in mind that $\lambda^0_{\bar{n}}$ does not belong to the spectrum of the problem \eqref{pbv'_1}-\eqref{bcv'} and the convergence \eqref{conveigenva}, for small $\varepsilon>0$  $\eigenv_n$  is not an eigenvalue of the problem 
        \begin{align*}
         -\laplace v^{\varepsilon}_{1,n} (x) &= \lambda^{\varepsilon}_nv^{\varepsilon}_{1,n}(x), \hspace{0.5cm} x\in\ballone,\\
         \normalderone v^{\varepsilon}_{1,n}(x)=0,\hspace{0.3cm} x\in\boundone,\hspace{0.3cm}&\hspace{1cm}
         v^{\varepsilon}_{1,n}(x)=0, \hspace{0.3cm} x\in\boundzero.
         \end{align*} 
As a consequence, we have 
     \begin{equation}
     \label{boundness}
     \|v^{\varepsilon}_{1,n}\|_{H^1(\ballone)}\leq c\|v^{\varepsilon}_{0,n}\|_{H^{1/2}(\boundzero)}\leq c\|v^{\varepsilon}_{0,n}\|_{H^{1}(\ballzero)}\leq c.
     \end{equation} 
The inequalities  \eqref{newnormcond} and \eqref{boundness} lead to check the normalization condition of the eigenfunction $v^0_{0,\bar{n}}$. The Theorem \ref{thmmain} is still valid with the estimate
        \begin{equation*}
            \label{estm=0}
            |\eigenv-\lambda^0_n-\varepsilon^{1/2}\lambda'_n|\leq C_N\varepsilon.
            \end{equation*}



\section{Asymptotic expansion for $\boldsymbol{m>1/2}$}

We postulate the asymptotic expansions for the eigenvalue $\eigenv$ 
    \begin{equation}
    \label{eigenm>12}
    \eigenv = \varepsilon^{2m-1}\lambda^0+ \varepsilon^{2m}\lambda^\prime+\cdots.
    \end{equation}
For the corresponding eigenfunctions $\{ \uzero,\uone\}$ we consider an asymptotic expansion of the form
     \begin{align}
     \uzero&=u^0_0+\varepsilon u^\prime_0+\cdots,\hspace{2.4cm}x\in\ballzero,\label{u0m>1/2}\\
     \uone &= u^0_1+\varepsilon^{\min\{1,2m-1\}}u_1^\prime+\cdots, \qquad x\in\ballone.\label{u1m>1/2} 
     \end{align}

        \ 
        
\noindent Using the same procedure as in the other cases, we find that the leading terms $\lambda^0$, $u^0_0$ in \eqref{eigenm>12}, \eqref{u0m>1/2} are characterized as the solution to the spectral problem 
     \begin{equation}
       -\laplace u^0_0(x)=\lambda^0u^0_0(x), \hspace{0.3cm}x\in\ballzero,\hspace{1cm}
       \normalderzero u^0_0(x) =0, \hspace{0.3cm} x\in\boundzero.\label{limprobm>1/2}
     \end{equation} 
The problem \eqref{limprobm>1/2} in the Sobolev space $H^1(\ballzero)$ has a discrete spectrum
    \begin{equation*}
    0=\lambda^0_1<\lambda^0_2\leq\cdots\leq\lambda^0_n\leq\cdots\to\infty
    \end{equation*}
and the corresponding eigenfunctions $u^0_{0,n}$ are subject to the orthonormalization condition in $L^2(\ballzero)$. The leading term $u^0_1$ in \eqref{u1m>1/2} is defined as a unique solution 
of the problem
   \begin{align*}
   -\laplace u^0_1(x) &= 0, \hspace{1.1cm}x\in\ballone,\\ 
   \normalderone u^0_1(x) =0, \hspace{0.3cm}x\in\boundone, \hspace{0.5cm}&\hspace{1.1cm}
   u^0_1(x) = u^0_0(x)\hspace{0.3cm}x\in\boundzero.
   \end{align*}
If $\min\{1,2m-1\}= 2m-1$, the problem for the correction term $u^\prime
_1$ in \eqref{u1m>1/2} is  
   \begin{align*}
   -\laplace u^\prime_1(x)&=\lambda^0u^0_1(x), \hspace{0.8cm}x\in\ballone,\\
   \normalderone u^\prime_1(x) =0,\hspace{0.3cm}x\in\boundone,\hspace{0.4cm}&\hspace{1.1cm}
   u^\prime_1(x) = 0, \hspace{0.3cm}x\in\boundzero.
   \end{align*} 
If $\min\{1,2m-1\}= 1$, the correction term $u^\prime
_1$ is characterized as the solution to the problem 
     \begin{align*}
     -\laplace u^\prime_1(x)&=0, \hspace{1.2cm}x\in\ballone,\\
     \normalderone u^\prime_1(x) =0,\hspace{0.3cm}x\in\boundone,\hspace{0.4cm}&\hspace{0.9cm}
     u^\prime_1(x) = u^\prime_0(x), \hspace{0.3cm}x\in\boundzero.
     \end{align*}
We point out that when $m=1$, the problem satisfied by $u^\prime_1$ turns into 
      \begin{align*}
      -\laplace u^\prime_1(x)&=\lambda^0u^0_1, \hspace{0.7cm}x\in\ballone,\\
       \normalderone u^\prime_1(x) =0,\hspace{0.3cm}x\in\boundone,\hspace{0.4cm}&\hspace{0.9cm}
      u^\prime_1(x) = u^\prime_0, \hspace{0.3cm}x\in\boundzero.
      \end{align*}
The correction term $u^\prime_0$ in the asymptotic expansion \eqref{u0m>1/2} is determined from the problem
    \begin{align}
    -\laplace u^\prime_0(x) -\lambda^0 u^\prime_0(x) =\lambda^\prime u^0_0(x), \hspace{0.3cm} x\in\ballzero,\hspace{1cm}
    \normalder u^\prime_0(x) = \normalder u^0_1(x), \hspace{0.3cm} x\in\boundzero. \label{pbmu}
    \end{align}
The compatibility condition in the problem \eqref{pbmu} provides the correction term $\lambda^\prime$. Indeed, if the eigenvalue $\lambda^0_n$ is simple then we get
   \begin{equation}
   \label{m>12simple}
   \lambda^\prime = -\|\grad u^0_1\|^2_{L^2(\ballone)}.
   \end{equation}
Assume now that the eigenvalue $\lambda^0_n$ has multiplicity $\tau>1$, i.e. $\lambda^0_{n-1}<\lambda^0_{n} = \cdots = \lambda^0_{n+\tau-1}<\lambda^0_{n+\tau}$. The leading term in the expansions  \eqref{u0m>1/2} are predicted in the form of linear combinations of the eigenfunctions $u^0_{0,n},\dots,u^0_{0,n+\tau-1}$
   \begin{equation*}
    U^0_{0,j} (x) = a^j_n u^0_{0,n}(x) + \dots +a^j_{n+\tau-1} u^0_{0,n+\tau-1}(x), \qquad j=n,\dots,n+\tau-1.
   \end{equation*}
Therefore, $U^\prime_{0,j}$ is the solution to the problem 
   \begin{align*}
   -\laplace U^\prime_{0,j} (x) -\lambda^0_n U^\prime_{0,j}(x) =\lambda^\prime_j U^0_{0,j}(x), \hspace{0.3cm} x\in\ballzero,\hspace{0.5cm}
   \normalder U^\prime_{0,j}(x) = \sum_{k=n}^{n+\tau-1} a^j_k\normalderzero u^0_{1,k}(x), \hspace{0.2cm}x\in\boundzero.
   \end{align*}
According to the Fredhom alternative, the $\tau$ compatibility conditions are
   \begin{align}
   \lambda^\prime_j (U^0_{0,j}, u^0_{0,p})_{\ballzero} &= (\normalderzero U^\prime_{0,j},u^0_{0,p})_{\boundzero} = \sum_{k=n}^{n+\tau-1}a^j_n (\normalderzero u^0_{1,k}, u^0_{0,p})_{\boundzero}\notag\\
   & = \sum_{k=n}^{n+\tau-1}a^j_n (\grad u^0_{1,k}, \grad u^0_{0,p})_{\ballone}, \hspace{0.4cm} j=n,\dots,n+\tau-1\label{m>12mul}
   \end{align}
The previous relation can be written as 
the formula \eqref{min1} with a different Gram matrix. In other words, the $\tau$ correction terms are the eigenvalues of the Gram matrix $G$ whose entries are given by
   \begin{equation*}
   G_{i.j} = (\grad u^0_{1,i}, \grad u^0_{1,j})_{\ballone}, \qquad i,j=n,\dots, n+\tau-1,
   \end{equation*}
with $a_n^j$ being the corresponding eigenvectors. The estimate \eqref{justasy} of Theorem \ref{thmmain} holds with $\alpha=2m-1$, $\beta=m$, $\gamma= \min\{4m-1,1+m\}$ for $m\in (1/2,1)$ and $\gamma=2m+1$ for $m\geq1$, $\lambda^0_n$ being the eigenvalue of the problem \eqref{limprobm>1/2} and  $\lambda^\prime_n$ being the correction term, given by formula \eqref{m>12simple} if $\lambda^0_n$ is a simple eigenvalue and \eqref{m>12mul} if $\lambda^0_n$ is a multiple one.

\section{Kissing domains in $\boldsymbol{\mathbb{R}^2}$}
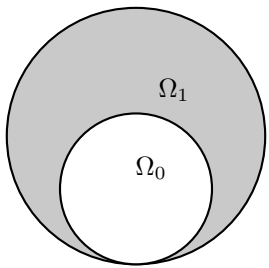
\begin{figure}
\centering
\begin{tikzpicture}
\draw[fill=mygray, thick] (0,0)circle(1.7)node[above, xshift=0.5cm, yshift=9.4]{$\Omega_1$};
\draw [fill=white, thick] (0,-0.7) circle(1)node[above, xshift=0.2cm, yshift=0]{$\Omega_0$}; 
\end{tikzpicture}
\caption{Kissing domains}\label{fig:kissingdomains}
\end{figure}
A distinguishing feature of the stiff Neumann problem is that all asymptotic forms derived and justified in previuous sections, are preserved when $\dist (\boundzero,\boundone) \to 0$, i.e in the limit the core $\ballzero$ touches the exterior boundary $\boundone$, cf. Fig. \ref{fig:kissingdomains}, so that $\ballzero$ and $\Omega$ form the interior kiss of two domains. This peculiar conclusion is certainly based on the exterior Neumann condition \eqref{Neumann} and the prevaling stiffness of th annulus $\ballone$. Changing particular details in problem's statement may quit the above-mentioned limit passage: in Section 8.4 we will discuss a serious issue in the case when the Neumann condition \eqref{Neumann} is replaced with the homogeneous Dirichlet one. In this way, in many cases the Dirichlet problem of the type \eqref{uone}-\eqref{bduzero1} remains a fully open question. \\The performed asymptotic analysis demonstrates that in all situations under consideration the limit problem in the cuspidal annulus reads
      \begin{align}
      -\laplace u(x) &=\lambda u(x), \qquad x\in\ballone \label{kisspb}\\
      \normalderone u(x)=0, \hspace{0.3cm}x\in\boundone\setminus\cusp\hspace{0.5cm} & \hspace{1.1cm} u(x)=g(x), \hspace{0.3cm} x\in\boundzero\setminus\cusp,\label{kissbc}
      \end{align}
where $\lambda\geq 0$ and $g=0$ or $g=\text{const}$ on the boundary $\boundzero$. Denoting by $G\in H^1(\mathbb{R}^d\setminus\ballzero)$ an extension of $g$ onto the exterior of $\ballzero$, the variational formulation of the problem \eqref{kisspb}-\eqref{kissbc} reads, cf. \cite{L85}: find $u\in H^1(\ballone)$ such that $u-G\in H^1(\ballone; \boundzero)$ and the following integral identity is valid 
     \begin{equation}
     \label{varide}
     (\grad u, \grad v)_{\ballone} = \lambda (u,v)_{\ballone} \qquad\forall v\in  \mathbf{H},
     \end{equation}
with $\mathbf{H} = H^1_0(\ballone; \boundzero)$ if $g=0$ on $\boundzero$ or $\mathbf{H} = H^1_{\bullet}(\ballone, \boundzero)$ if $g=\text{const}$ on $\boundzero$
Due to the Dirichlet condition on $\boundzero$, the space $ H^1_0(\ballone; \boundzero)$ is compactly embedded into $ L^2(\ballone)$ \footnote{This fact is true for $ H^1(\ballone)$ as well, see, e.g. \cite{MP97}}. Hence, all necessary properties of the problem \eqref{kisspb}-\eqref{kissbc} are kept in the cuspidal domain $\ballone$ and these allow us to repeat with easily predictable modifications our calculations and argumentation in previous sections, to conclude the analog of the Theorem \ref{thmmain} in the case of the tounching boundaries $\boundzero$ and $\boundone$. \\ In the sequel we will describe the asymptotic behaviour as $x\to\cusp$ of solutions to the mixed boundary value problem \eqref{kisspb}-\eqref{kissbc} and a similar Dirichlet problem in $\ballone$. However, to reduce cumbersome and long computations, we deal with the 2D case only while a needed modification for multi-dimensional cases can be found in the papers \cite{N94, NST09, NT11, NT18} and others.


\subsection{Asymptotics of solutions at the cusp in Neumann case}
We consider the spectral problem  \eqref{kisspb}-\eqref{kissbc}, where $g=c_0$ on $\boundzero\setminus\cusp$ and $c_0$ is an arbitrary constant. Set $R_0, R_1$  the radii of the disks $\ballzero, \ballone$ respectively such that $R_0<R_1$. The boundaries $\boundzero$ and $\boundone$ are described by 
  \begin{equation}
  \label{Hfunc}
   H_i(x_1) = \frac{|x_1|^2}{2R_i} + O(|x_1|^4), \qquad i=0,1.
  \end{equation}
The thickness is defined as  $H(x_1)=H_0(x_1) - H_1(x_1)$.  We write down the representation
     \begin{equation}
     \label{zeroterm}
     u(x)=c_0+\cdots, \qquad x\rightarrow\cusp,
     \end{equation}
where the dots denote the lower-order terms. The distinguished asymptotic term on the right-hand side of \eqref{zeroterm} satisfies the boundary conditions \eqref{kissbc} but generates the residual 
    \begin{equation*}
    \label{residual}
    \lambda c_0 + \cdots 
    \end{equation*}
in the differential equation \eqref{kisspb}. Then, we introduce a new term $\firstterm$ in \eqref{zeroterm},  involving the stretched coordinate
      \begin{equation*}
      \label{strectcoord}
      \eta = \frac{x_2-H_{1}(x_1)}{H(x_1)}\in (0,1).
      \end{equation*}
Then the asymptotic \eqref{zeroterm} turns into 
     \begin{equation}
     \label{ansatz}
     u(x) = c_0 + \firstterm +\cdots, \qquad x\rightarrow\cusp.
     \end{equation}
In order to rewrite \eqref{kisspb} in the new variables $(x_1,\eta)$, we evalute
      \begin{align}
      \frac{\partial}{\partial x_2} &= \frac{\partial \eta}{\partial x_2}\frac{\partial}{\partial \eta} = \frac{1}{H(x_1)}\frac{\partial}{\partial \eta},\hspace{2.7cm}  \frac{\partial^2}{\partial x_2^2} = \frac{1}{H(x_1)^2}\frac{\partial^2}{\partial \eta^2},\label{dereta}\\
      \frac{\partial}{\partial x_1} &= \frac{\partial}{\partial x_1}-H(x_1)^{-1}(H'_1(x_1)+\eta H'(x_1))\frac{\partial}{\partial\eta}, \notag\\ 
      \frac{\partial^2}{\partial x_2^2} &=\left(\frac{\partial}{\partial x_1} -H(x_1)^{-1}(H'_1(x_1)+\eta H'(x_1))\frac{\partial}{\partial\eta}\right)^2\notag \\
      &\quad+ \left(\frac{2H'(x_1)H'_1(x_1) + 2(H'(x_1))^2\eta-H''_1(x_1)H(x_1) - H''(x_1)H(x_1)\eta}{H(x_1)^2}\right)\frac{\partial}{\partial\eta},\label{derx_1}
      \end{align}
where $H'(x_1) = \frac{dH(x_1)}{dx_1}$. Owing to \eqref{Hfunc}, \eqref{dereta}, \eqref{derx_1}, the Laplace operator $\Delta_{(x_1,x_2)}$ in the new variables $(x_1,\eta)$ is written as
   \begin{align}
   \Delta_{(x_1,\eta)} &= 
    \frac{1}{H^p(x_1)^2}\frac{\partial^2}{\partial \eta^2} + \sum_{j=1}^{\infty} L_j(x_1, \eta, \frac{\partial}{\partial x_1},\frac{\partial}{\partial\eta}),\label{lapkissdom}
   \end{align}
where we replaced the thickness function $H(x_1)$ with its principal part
    \begin{equation*}
    H^p(x_1)=\frac{1}{2}\left(\frac{1}{R_0}-\frac{1}{R_1}\right)|x_1|^2.
    \end{equation*}
The normal derivative on the lower boundary $\boundone$ can be written as
        \begin{align}
         \normalderone &= \frac{1}{(1+|H'_{1}(x_1)|^2)^{1/2}}\biggl(\frac{\partial}{\partial x_2}-H'_{1}(x_1)\frac{\partial}{\partial x_1}\biggr)\notag \\
         &= \frac{1}{(1+|H'_{1}(x_1)|^2)^{1/2}}\biggl(\frac{1}{H(x_1)}\frac{\partial}{\partial \eta} -H'_{1}(x_1) \frac{\partial}{\partial x_1}
         +H'_{1}(x_1)H(x_1)^{-1}(H'_1(x_1)+\eta H'(x_1))\frac{\partial}{\partial\eta}\biggr).\label{normderkiss}
        \end{align}
In view of \eqref{lapkissdom} and \eqref{normderkiss}, we insert the expansion ansatz \eqref{ansatz} into the problem \eqref{kisspb}-\eqref{kissbc}, obtaining the problem 
     \begin{align}
     -\frac{1}{H^p(x_1)^2} \frac{\partial^2}{\partial \eta^2}\firstterm & = \lambda c_0, \label{pbfirst}\\
     \frac{\partial}{\partial \eta}\firstterm_{\big| \eta = 0} =0,\notag \hspace{1.4cm}&\hspace{0.9cm}
     \firstterm_{\big| \eta = 1} = 0.\label{bdcondU_0}
     \end{align} 
By a direct computation, the solution $\mathcal{U}_1$ is given by
     \begin{equation}
     \label{explfirs}
     \mathcal{U}_1(x_1,x_2) = -\frac{\lambda c_0}{2}[x^2_2-2H_1^p(x_1)(x_2+H_0^p(x_1)) -H^p_0(x_1)^2   ],  
     \end{equation}
where $H_i^p(x_1)=|x_1|^2/(2R_i)$ denotes the principal part of $H_i(x_1)$, $i=0,1$. Note that the first-order correction term $\mathcal{U}_1(x_1,x_2) $ is of order $|x_1|^4$. Iterating this procedure, we are able to construct the formal infinite series of the eigenfunction $u$ of the problem \eqref{kisspb}-\eqref{kissbc} 
    \begin{equation}
    \label{formseri}
    u(x)=c_0+\sum_{j=1}^{\infty} \mathcal{U}_j(x),
    \end{equation}
containing the already chosen main term \eqref{explfirs}.
Keeping in mind the decomposition \eqref{lapkissdom}, \eqref{normderkiss} and replacing the eigenfunction $u$ with its formal series \eqref{formseri} into the equation \eqref{kisspb}, we find that the term $\mathcal{U}_2$ is solution of the problem 
    \begin{align*}
    -\frac{1}{H^p(x_1)^2}\frac{\partial^2}{\partial\eta^2}\mathcal{U}_2(x_1,\eta) &=  L_1(x_1, \eta, \frac{\partial}{\partial x_1},\frac{\partial}{\partial\eta},\lambda)\firstterm, \\
   \partial_\eta\mathcal{U}_2(x_1,\eta)_{\big| \eta = 0} =N_1(\frac{\partial}{\partial x_1},\frac{\partial}{\partial\eta})\mathcal{U}_1(x_1,\eta)_{\big| \eta = 0}, \hspace{0.1cm}&\hspace{1.3cm}
   \mathcal{U}_2(x_1, \eta)_{\big| \eta = 1} = 0,
    \end{align*}
where
   \begin{align*}
    L_1(x_1, \eta, \frac{\partial}{\partial x_1},\frac{\partial}{\partial\eta}, \lambda) &= \biggl(  \frac{\partial}{\partial x_1}-H^p(x_1)^{-1}((H^p_1)'(x_1)+\eta (H^p)'(x_1))\frac{\partial}{\partial\eta}  \biggr)^2,\\
    &\quad +\biggl(\frac{2(H^p(x_1))'(H^p_1(x_1))' + 2(H'^p(x_1))^2\eta}{H^p(x_1)^2}\\
    &\quad -\frac{(H^p_1)''(x_1)H^p(x_1) + (H^p(x_1))''H^p(x_1)\eta}{H^p(x_1)^2}\biggr)\frac{\partial}{\partial\eta}+\frac{|x_1|^6}{8R^4}\lambda c_0\\
    N_1(\frac{\partial}{\partial x_1},\frac{\partial}{\partial\eta}) &= (H^p_1)'(x_1)\left( \frac{\partial}{\partial x_1} + \frac{(H^p_1)'(x_1)}{(H^p)'(x_1)}\frac{\partial}{\partial\eta}\right).
   \end{align*}
The other terms of the series \eqref{formseri} are determined by the problems
     \begin{align*}
     -\frac{1}{H^p(x_1)}\partial_{\eta}\mathcal{U}_j(x_1,\eta) &= L_{j-1}(x_1, \eta, \frac{\partial}{\partial x_1},\frac{\partial}{\partial\eta},\lambda)\mathcal{U}_{j-1}(x_1,\eta) + \lambda \mathcal{U}_{j-2}(x_1,\eta), \qquad j=3, 4, \cdots,\\
     \partial_\eta\mathcal{U}_j(x_1,\eta)_{\big| \eta = 0} &= N_{j-1}(0, \frac{\partial}{\partial x_1},\frac{\partial}{\partial\eta})\mathcal{U}_{j-1}(x_1,\eta)_{\big| \eta = 0}, \hspace{1cm}
     \mathcal{U}_j(x_1, \eta)_{\big| \eta = 1} = 0.
     \end{align*}
We point out that the terms $\mathcal{U}_j$, $j=2,3,\cdots$, of the series \eqref{formseri} are of order $|x_1|^{2j+2}$.

\subsection{Justification of Asymptotics}
Let $\chi$ be a smooth cut-off function such that $0\leq\chi(x_1)\leq 1$ and 
      \begin{equation*}
      \label{cutoff}
      \chi(x_1) =0, \hspace{0.2cm}\text{if}\hspace{0.1cm} |x_1|\geq R_0, \qquad \chi (x_1) =1 \hspace{0.2cm}\text{if}\hspace{0.1cm} |x_1|\leq \frac{R_0}{2}.
      \end{equation*}
We set  
    \begin{equation}
    \label{asyform}
    u(x)=c_0 +\chi(x_1) \mathcal{U}_1(x)+ \remainder(x),
    \end{equation}
with $\remainder(x)$ being the remainder. 

   \begin{thm}
   \label{kissthm}
   The solution $u$ of the spectral problem \eqref{kisspb}-\eqref{kissbc} admits the asymptotic form \eqref{asyform}. More specifically, there exists an exponent $N>0$ such that the norm $$\|\rho(x)^{-N}\tilde{u}\|_{H^1_0(\ballone,\boundzero)}<\infty$$
   and the functions  $\rho(x)^{-N}(u(x)-c_0)$  and $\rho(x)^{-N} \firstterm $ do not belong to the Sobolev space $H^1$ in a neighbourhood of the cusp $\cusp$.
   \end{thm}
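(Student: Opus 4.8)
The plan is to obtain the form \eqref{asyform} by continuing the formal construction of Section~8.1, to control the remainder by a weighted a~priori estimate localised at $\cusp$, and to settle the sharpness assertions by a direct computation on $\mathcal{U}_1$; throughout, $\rho(x)\asymp|x_1|$ near $\cusp$. First I would localise: fix a small $\delta>0$, put $\omega_\delta=\{x\in\ballone:|x_1|<\delta\}$, and pass to the coordinates $(x_1,\eta)$, $\eta=(x_2-H_1(x_1))/H(x_1)\in(0,1)$, which map $\omega_\delta$ onto a rectangle and turn \eqref{kisspb}--\eqref{kissbc} into the degenerate problem of \eqref{lapkissdom}--\eqref{normderkiss}, with principal part $(H^p(x_1))^{-2}\partial_\eta^2$, Neumann data at $\eta=0$ and Dirichlet data at $\eta=1$. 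Since this principal part carries no $\partial_{x_1}$, I would then pass to the cylindrical variable $\tau$ defined by $d\tau/dx_1=-(H^p(x_1))^{-1}$, so $\tau\to+\infty$ as $x_1\to 0^{\pm}$, and rescale the equation by $(H^p)^2$; this change is asymptotically conformal near $\cusp$, hence a quasi-isometry on dyadic subcylinders along which weighted $H^1$-norms transfer, and it converts the problem into $-\Delta_{(\tau,\eta)}w=f$ on (two copies of) the half-cylinder $\{\tau>\tau_0,\ 0<\eta<1\}$, Neumann at $\eta=0$ and Dirichlet at $\eta=1$, with coefficients and datum differing from the model ones by terms decaying an extra power of $\tau^{-1}$; the leading part of $f$ is $\lambda c_0(H^p)^2\sim\mathrm{const}\cdot\tau^{-4}$, which is precisely the behaviour $|x_1|^4$ of $\mathcal{U}_1$ near $\cusp$.

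The core is the weighted estimate on this half-cylinder. The transverse operator $-\partial_\eta^2$ on $(0,1)$ with $\phi'(0)=0$ and $\phi(1)=0$ has the strictly positive spectrum $\mu_k=((2k+1)\pi/2)^2$, $k\ge0$, and, crucially, $0\notin\{\mu_k\}$. This is exactly the effect of the \emph{Neumann} condition on $\boundone$: with the Dirichlet condition $u=0$ on $\boundone$ the constant profile $c_0$ would have to be replaced by the transverse profile $c_0\eta$, whose Dirichlet integral over $\omega_\delta$ diverges, which is the obstruction advertised in the Introduction. Since $0$ is not a transverse eigenvalue, every bounded solution of the homogeneous problem on the half-cylinder is a superposition of the modes $e^{\pm\sqrt{\mu_k}\,\tau}\phi_k(\eta)$; the growing ones carry infinite Dirichlet energy and are ruled out because $u\in H^1(\ballone)$, while the decaying ones are $O(e^{-(\pi/2)\tau})$ and hence smaller than every power of $|x_1|$. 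Thus the only source of algebraic decay is the right-hand side, and the Kondratiev / Nazarov--Plamenevsky theory of elliptic problems on cylindrical ends and cuspidal points (\cite{NP94, KMR97, N94, NST09, NT11, NT18}) supplies, for \emph{every} finite weight exponent $N$, an estimate of the form $\|\rho^{-N}w\|_{H^1(\omega_\delta)}\le C\big(\|\rho^{-N}f\|_{\ast}+\|w\|_{H^1(\omega_\delta\setminus\omega_{\delta/2})}\big)$, with a matching weighted norm on the datum.

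With this available I would peel off the first terms. From $u\in H^1(\ballone)$ and $-\laplace(u-c_0)=\lambda(u-c_0)+\lambda c_0$ in $\omega_\delta$, $u-c_0$ vanishing on $\boundzero\cap\omega_\delta$ and with zero Neumann trace on $\boundone\cap\omega_\delta$, the estimate together with elliptic regularity up to the smooth boundary arcs gives that $u-c_0$ is bounded and $o(1)$ at $\cusp$. Up to a harmless adjustment of $\mathcal{U}_1$ by terms of order $|x_1|^6$, the truncation $\chi\mathcal{U}_1$ satisfies \eqref{kissbc} with $g=0$ exactly, and, by \eqref{lapkissdom}--\eqref{normderkiss} and $\mathcal{U}_1=O(|x_1|^4)$, solves the equation up to a residual $-\laplace(\chi\mathcal{U}_1)-\lambda\chi\mathcal{U}_1-\lambda c_0\chi=L_1\mathcal{U}_1+(\text{terms of order }|x_1|^6)+(\text{commutators supported in }\delta/2\le|x_1|\le\delta)$, where $L_1\mathcal{U}_1=O(|x_1|^2)$ is exactly the datum generating $\mathcal{U}_2=O(|x_1|^6)$. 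Hence $\remainder=u-c_0-\chi\mathcal{U}_1$ solves a homogeneous-boundary problem with datum of order $|x_1|^6$ near $\cusp$, so the weighted estimate yields $\rho^{-N}\remainder\in H^1$, and, $\remainder$ being smooth up to $\overline{\boundone}\setminus\cusp$ and $\overline{\boundzero}\setminus\cusp$ and vanishing on $\boundzero$, $\rho^{-N}\remainder\in H^1_0(\ballone,\boundzero)$, for every $N<11/2$. For sharpness, solving \eqref{pbfirst} gives $\mathcal{U}_1=\frac{\lambda c_0}{2}\,H^p(x_1)^2(1-\eta^2)=|x_1|^4Q(\eta)+O(|x_1|^6)$ with $Q$ a genuine quadratic polynomial, so $Q'\not\equiv0$, in the relevant case $\lambda c_0\ne0$; therefore $\partial_{x_2}(\rho^{-N}\mathcal{U}_1)\sim|x_1|^{4-N}H(x_1)^{-1}Q'(\eta)\sim|x_1|^{2-N}Q'(\eta)$ and $\int_{\omega_\delta}|\partial_{x_2}(\rho^{-N}\mathcal{U}_1)|^2\,dx\asymp\int_0^\delta|x_1|^{6-2N}\,dx_1=+\infty$ once $N\ge7/2$. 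Any $N\in[7/2,11/2)$, say $N=4$, therefore makes all three assertions hold, with $\rho^{-N}(u-c_0)=\rho^{-N}\chi\mathcal{U}_1+\rho^{-N}\remainder\notin H^1$ near $\cusp$ because the first summand is not and the second is. I expect the main obstacle to be the weighted a~priori estimate: although $\lambda$ is an eigenvalue and $-\laplace-\lambda$ is globally non-invertible, one must argue that the near-cusp behaviour of the given $H^1$-eigenfunction is nonetheless controlled by a purely local estimate, resting on positivity of the transverse spectrum and on the global energy bound that excludes the growing cylindrical modes; the remaining work, namely the bookkeeping through \eqref{lapkissdom}--\eqref{normderkiss} that pins down the order of the residual after $c_0+\chi\mathcal{U}_1$ is subtracted, is routine but lengthy.
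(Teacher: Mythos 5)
Your proposal is correct in substance and lands on the right exponents, but it follows a genuinely different route from the paper. The paper never leaves the physical variables: it writes the equation for $\remainder=u-c_0-\chi\,\U_1$ with homogeneous boundary conditions, tests it against $V=T_\delta^2\remainder$ where $T_\delta$ is the capped weight equal to $\rho^{-N}$ on $\{\delta<\rho\le R_0/2\}$, and after the standard commutator manipulation reduces everything to $\|\nabla(T_\delta\remainder)\|^2\le(\text{data terms})+\|T_\delta\remainder\,T_\delta^{-1}\nabla T_\delta\|^2+\lambda\|T_\delta\remainder\|^2$; the data terms are controlled by the thin-strip Poincar\'e inequality $\|H^{-1}\tilde v\|\le C\|\nabla\tilde v\|$ together with the orders $H=O(|x_1|^2)$, $\partial_{x_1}^2\U_1=O(|x_1|^2)$, $\U_1=O(|x_1|^4)$, and the bound survives the monotone limit $\delta\to0$. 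The binding constraint there is the finiteness of $\|\rho^{-N}H\,\partial_{x_1}^2\U_1\|_{L^2(\ballone)}$, which gives exactly your $N<11/2$ (the "$11/5$" in the printed proof is a typo). You instead pass to the half-cylinder via $\tau\sim 1/|x_1|$ and invoke the Kondratiev--Nazarov--Plamenevsky machinery; your positivity of the transverse Neumann--Dirichlet spectrum plays precisely the role of the paper's Poincar\'e inequality, and your exclusion of growing modes by finite energy replaces the paper's use of $\remainder\in H^1_0(\ballone,\boundzero)$. What your route buys is conceptual clarity (it isolates \emph{why} the Neumann condition on $\boundone$ saves the day, and it would deliver the full series \eqref{formseri} with exponentially small error at once); what the paper's route buys is a short, self-contained argument that does not require justifying the cylindrical reduction or the weighted a priori estimate for cuspidal, rather than genuinely cylindrical, ends --- which is exactly the nontrivial input you would have to import from \cite{N94,NST09,NT18}. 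Two points need care in your write-up: the phrase "datum of order $|x_1|^6$" is only correct for the rescaled cylindrical datum $(H^p)^2(L_1\U_1+\cdots)$ --- in the physical variables the residual $L_1\U_1$ is merely $O(|x_1|^2)$, and it is the extra factor $H\sim|x_1|^2$ coming from the transverse Poincar\'e (or Jacobian) that restores $N<11/2$; and the sharpness computation on $\U_1$, which correctly yields divergence of the Dirichlet integral for $N\ge7/2$ so that any $N\in[7/2,11/2)$ works, tacitly requires $\lambda c_0\neq0$. Your explicit treatment of the two non-membership assertions is actually more complete than the paper's own proof, which establishes only the weighted bound on $\remainder$.
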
     
 \begin{proof} 
 The remainder $\remainder$ verifies the following equation
    \begin{equation}
    \label{eqrem}
    -\laplace\remainder(x)-\lambda\remainder(x) = \lambda c_0+\lambda\mathcal{U}_1(x)\chi(x_1) +\laplace (\chi(x_1)\mathcal{U}_1(x)),\qquad x\in\ballone\setminus\cusp,
    \end{equation} 
  along  with homogeneous boundary conditions
      \begin{equation*}
      \normalderone \tilde{u}(x) = 0, \hspace{0.2cm}x\in\boundone\setminus\cusp , \qquad \tilde{u}(x)=0,\hspace{0.2cm}x\in\boundzero\setminus\cusp .
      \end{equation*}
  Multiplying \eqref{eqrem} by an arbitrary test function $v\in H^1_0(\ballone, \boundzero)$ and integrating in $\ballone$, we find 
     \begin{align}
     \label{probrem}
     (-\laplace\remainder, v)_{\ballone} -\lambda(\remainder,v)_{\ballone} &=(\laplace(\U_1\chi), v)_{\ballone}+\lambda(c_0, v)_{\ballone} +\lambda (\U_1\chi,v)_{\ballone} \notag \\
     &= (\chi\laplace\U_1, v)_{\ballone} + ([\laplace, \chi]\U_1,v)_{\ballone} +\lambda(c_0, v)_{\ballone}+ \lambda (\U_1\chi,v)_{\ballone}.
     \end{align}
 The commutator $[\laplace,\chi]$ is 
    \begin{equation*}
    [\laplace,\chi]\U = 2\grad \U\cdot\grad\chi + \U\laplace\chi.  
    \end{equation*}
 Let $\rho$ denote a smooth positive function on $\ballone$ which coincides with the distance to the origin of the Cartesian coordinate system in a neighbourhood of the cuspidal point $\cusp$ and introduce the weight function
      \begin{equation*}
      T_{\delta}(x) =
      \begin{cases}
      \delta^{-N}, & \quad\text{if}\quad\rho(x)\leq \delta,\\
      \rho(x)^{-N}, & \quad\text{if}\quad \delta<\rho(x)\leq R_0/2,\\
      (R_0/2)^{-N}, & \quad\text{if}\quad\rho(x)> R_0/2,\\
      \end{cases}
      \end{equation*}
 where the parameter $\delta>0$ is small and will be sent to $0$. Later on, we will impose some constraints on the exponent $N$. We point out that the derivative of $T_{\delta}$ vanishes for $\rho(x)\leq\delta$, $\rho(x)> R_0/2$ and satisfies the inequality
    \begin{equation}
    \label{estweight}
    |\grad T_{\delta}(x)|\leq CT_{\delta}(x)\rho(x)^{-1},\qquad \delta<\rho(x)\leq R_0.
    \end{equation}
 Since $\tilde{u}\in H^1_0(\ballone, \boundzero)$, we choose as a test function $V=T_{\delta}\tilde{v}\in H^1_0(\ballone, \boundzero) $, with $\tilde{v} = T_{\delta}\remainder$. After algebraic transformations, the left-hand side of \eqref{probrem} can be written as 
    \begin{align}
    -(\laplace \remainder, V)_{\ballone} - \lambda(\remainder, V)_{\ballone} &= (\grad\remainder, \grad V)_{\ballone} - \lambda (\tilde{v},\tilde{v})_{\ballone}\notag \\
    &= (\grad\remainder, \tilde{v}\grad T_{\delta})_{\ballone} + (\grad\remainder, T_{\delta}\grad\tilde{v})_{\ballone} - \lambda(\tilde{v},\tilde{v})_{\ballone} \notag\\
    &=(T_{\delta}\grad\remainder,\tilde{v}T_{\delta}^{-1}\grad T_{\delta})_{\ballone} + (T_{\delta}\grad\remainder, \grad\tilde{v})_{\ballone}- \lambda(\tilde{v},\tilde{v})_{\ballone} \notag\\
    &=(\grad\tilde{v},\tilde{v}T_{\delta}^{-1}\grad T_{\delta})_{\ballone}-(\remainder\grad T_{\delta},\tilde{v}T_{\delta}^{-1}\grad T_{\delta})_{\ballone} + (\grad\tilde{v}, \grad\tilde{v})_{\ballone}\notag\\
    &\quad - (\remainder\grad T_{\delta}, \grad\tilde{v})_{\ballone} - \lambda(\tilde{v},\tilde{v})_{\ballone} \notag\\
    &=(\grad\tilde{v}, \grad\tilde{v})_{\ballone} - (\remainder\grad T_{\delta},\tilde{v}T_{\delta}^{-1}\grad T_{\delta})_{\ballone}- \lambda(\tilde{v},\tilde{v})_{\ballone}\notag\\
    &= \|\grad\tilde{v}\|_{L^2(\ballone)}^2 - \|\tilde{v}T_{\delta}^{-1}\grad T_{\delta}\|_{L^2(\ballone)}^2 - \lambda\|\tilde{v}\|_{L^2(\ballone)}^2. \label{est1}
    \end{align}
 From formulas \eqref{probrem} and \eqref{est1}, we find that 
     \begin{align}
         \|\grad\tilde{v}\|_{L^2(\ballone)}^2 
         &= (\chi\laplace\U_1, V)_{\ballone} + ([\laplace, \chi]\U_1,V)_{\ballone} +\lambda(c_0, V)_{\ballone}+ \lambda (\U_1\chi,V)_{\ballone}\notag\\
         &\quad+\|\tilde{v}T_{\delta}^{-1}\grad T_{\delta}\|_{L^2(\ballone)}^2+\lambda\|\tilde{v}\|^2_{L^2(\ballone)}.\label{gradtildev}
         \end{align}
 We estimate each terms in the previous equality. 
 Bearing in mind that the correction term $\U_1$ is the solution of \eqref{pbfirst}, we obtain
      \begin{align*}
       (\chi\laplace\U_1, V)_{\ballone}=(\chi\frac{\partial^2}{\partial x_1^2}\U_1, V)_{\ballone} + (\chi\frac{\partial^2}{\partial x_2^2}\U_1, V)_{\ballone} = (\chi\frac{\partial^2}{\partial x_1^2}\U_1, V)_{\ballone} - (\chi\lambda c_0, V)_{\ballone}.
       \end{align*}
 Therefore,
     \begin{align*}
     |\lambda(c_0, V)_{\ballone}- (\chi\lambda c_0, V)_{\ballone}| \leq |\lambda(c_0, V)_{\ballone\cap\{x: \rho(x)\geq R_0/2\}}| \leq \lambda c_0 (R_0/2)^{-2N} \|\tilde{u}\|^2_{L^2(\ballone)}<\infty.
     \end{align*}
 Moreover, from Poincar\`{e}'s inequality 
      \begin{equation}
       \label{fried}
       \| H(R_0)^{-1}\tilde{v}(x)\|_{L^2(\ballone)} \leq C \| \grad \tilde{v}(x)\|_{L^2(\ballone)}
       \end{equation}
 we find
    \begin{align*}
    \left|(\chi\frac{\partial^2}{\partial x^2}\U_1, V)_{\ballone} \right|&= |(\chi  T_{\delta}\frac{\partial^2}{\partial x^2}\U_1, \tilde{v})_{\ballone}|= |(\chi  T_{\delta}H(x_1)\frac{\partial^2}{\partial x_1^2}\U_1, H(x_1)^{-1}\tilde{v})_{\ballone}|\\
    &\leq \|\chi  T_{\delta}H(x_1)\frac{\partial^2}{\partial x_1^2}\U_1\|_{L^2(\ballone
    )}\|H(x_1)^{-1}\tilde{v}\|_{L^2(\ballone)}\\
    &\leq C \|\chi  T_{\delta}H(x_1)\frac{\partial^2}{\partial x_1^2}\U_1\|_{L^2(\ballone)}\|\grad\tilde{v}\|_{L^2(\ballone)}.
    \end{align*}
 Taking into account that $H(x_1)=O(|x_1|^2)$ and $\frac{\partial^2}{\partial x_1^2}\U_1 = O(|x_1|^2)$, the norm 
      \begin{align*}
      \|\chi  T_{\delta}H(x_1)\frac{\partial^2}{\partial x_1^2}\U_1\|_{L^2(\ballone)} &\leq \|\rho^{-N}H(x_1)\frac{\partial^2}{\partial x_1^2}\U_1\|_{L^2(\ballone\cap\{x:\rho(x)\leq R_0/2\}    )} \\
      &\quad+ (R_0/2)^{-2N}\|H(x_1)\frac{\partial^2}{\partial x_1^2}\U_1\|_{L^2(\ballone\cap\{x:\rho(x)> R_0/2\}    )}
      \end{align*}
 is finite for $N<\frac{11}{5}$. The term $([\laplace, \chi]\U_1,V)_{\ballone}$ involves the derivatives of the cut-off function $\chi$. Then it does not vanish only if $ R_0/2<\rho(x)<R_0$ and
     \begin{align*}
     |([\laplace, \chi]\U_1,V)_{\ballone}| &= |([\laplace, \chi]\U_1,V)_{\{x\in\ballone\hspace{0.02cm}:\hspace{0.02cm} R_0/2<\rho(x)<R_0\}}| \\
     &=|(T_{\delta}[\laplace, \chi]\U_1,\tilde{v})_{\{x\in\ballone\hspace{0.02cm}:\hspace{0.02cm} R_0/2<\rho(x)<R_0\}}| \\
     &\leq \| T_{\delta}[\laplace, \chi]H(x_1)\U_1 \|_{L^2(\{x\in\ballone\hspace{0.02cm}:\hspace{0.02cm} R_0/2<\rho(x)<R_0\})}\\
     &\quad \times\|H^{-1}(x_1)\tilde{v}  \|_{L^2(\{x\in\ballone\hspace{0.02cm}:\hspace{0.02cm} R_0/2<\rho(x)<R_0\})}\\
     &\leq C \| T_{\delta}[\laplace, \chi]H(x_1)\U_1 \|_{L^2(\{x\in\ballone\hspace{0.02cm}:\hspace{0.02cm} R_0/2<\rho(x)<R_0\})}\\
     &\quad\times\|\grad\tilde{v}  \|_{L^2(\{x\in\ballone\hspace{0.02cm}:\hspace{0.02cm} R_0/2<\rho(x)<R_0\})},
     \end{align*}
 which is finite for any value of $N$ since $T_{\delta}(x) = (R_0/2)^{-N}$ if $\rho(x)>R_0/2$. 
According to the inequality \eqref{estweight}, we have
      \begin{equation*}
       \|\tilde{v}T_{\delta}^{-1}\grad T_{\delta}\|_{L^2(\ballone)}^2\leq C\|\rho^{-1}\tilde{v}\|^2_{L^2(\ballone)}.
      \end{equation*}
 Choosing $R_0$ such that $\lambda\leq CH(R_0)^{-2}$, from \eqref{fried} we deduce
     \begin{align*}
      \lambda\|\tilde{v}\|_{L^2(\ballone)}^2 & \leq CH(R_0)^{-2} \|\tilde{v}\|_{L^2(\ballone)}^2 \leq C \|\grad\tilde{v}\|^2_{L^2(\ballone)}   
      \end{align*}
 Finally, we have 
    \begin{align*}
    |(\U_1\chi,V)_{\ballone}|&= |(T_{\delta}\U_1\chi,\tilde{v})_{\ballone}| = |(H(x_1)T_{\delta}\U_1\chi,H(x_1)^{-1}\tilde{v})_{\ballone} |\\
    &\leq \|H(x_1)T_{\delta}\U_1\chi\|_{L^2(\ballone)} \|H(x_1)^{-1}\tilde{v}\|_{L^2(\ballone)}\leq  C\|H(x_1)T_{\delta}\U_1\chi\|_{L^2(\ballone)} \|\grad\tilde{v}\|_{L^2(\ballone)}.
    \end{align*}
 The norm 
    \begin{align*}
     \|\chi H(x_1)T_{\delta}\U_1\|_{L^2(\ballone)} &\leq \|\rho^{-N}H(x_1)\U_1\|_{L^2(\ballone\cap\{x:\rho(x)\leq R_0/2\}    )} \\
     &\quad+ (R_0/2)^{-2N}\|H(x_1)\U_1\|_{L^2(\ballone\cap\{x:\rho(x)> R_0/2\}    )}
    \end{align*}
 is finite if and only if $N< 15/2$. Setting $N<11/2$, the relation \eqref{gradtildev} implies that 
 \begin{equation*}
 \|\rho^{-1}\tilde{v}\|^2_{L^2(\ballone)}+\|\grad\tilde{v}\|^2_{L^2(\ballone)}\leq C<\infty.
 \end{equation*}
 Since $T_{\delta}$ is monotone increasing as $\delta\rightarrow 0$, the limit of the last, bounded expression exists.
 
\end{proof}
Since terms in the formal series \eqref{formseri} are polynomials in $x$, we deduce the smoothness of the solution $u$ to the problem \eqref{kisspb}-\eqref{kissbc}. In virtue of estimate of Theorem \ref{kissthm}, the estimate of the reimander in a weighted $H^2$ follows from standard local estimates near smooth parts of the boundary (cf. \cite{NT18}).

\subsection{The Dirichlet case}
If we replace the boundary condition \eqref{kissbc} on $\boundzero$ with a homogeneous Dirichlet condition, i.e. $c_0=0$, then all eigenfunctions $u$ of the problem
      \begin{align}
      -\laplace u(x) &=\lambda u(x),\hspace{0.3cm} x\in\Omega_1,  \label{Dir1}\\
      \normalderone u (x) = 0,\hspace{1.0cm} x\in\boundone\setminus\cusp, \hspace{0.5cm}&\hspace{0.9cm} 
      u(x)=0,\hspace{1.0cm}x\in\boundzero\setminus\cusp,\label{Dir3}
      \end{align}
decay exponentially as $x\rightarrow \cusp$. 
   \begin{prop}
   The eigenfunction $u\in H^1_0(\ballone, \boundzero)$ of the problem \eqref{Dir1}-\eqref{Dir3} decays exponentially as $x\to\cusp$.
   \end{prop}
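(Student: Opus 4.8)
The plan is to reproduce the weighted energy scheme of the proof of Theorem~\ref{kissthm}, but now for the \emph{homogeneous} problem \eqref{Dir1}--\eqref{Dir3} and with an \emph{exponential} weight in place of the polynomial weight $\rho^{-N}$: since here $c_0=0$, the right-hand side in the analogue of \eqref{eqrem} disappears, so nothing prevents the weight from growing arbitrarily fast near $\cusp$. First I would fix the coordinates $(x_1,x_2)$ near $\cusp$ as in Section~8.1, so that close to the cusp $\ballone=\{(x_1,x_2):H_1(x_1)<x_2<H_0(x_1),\ |x_1|<d_0\}$ with thickness $H(x_1)=H_0(x_1)-H_1(x_1)$; by \eqref{Hfunc} and $R_0<R_1$ one has the two-sided bound $c_\ast|x_1|^2\le H(x_1)\le C_\ast|x_1|^2$ for $d_0$ small. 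An elementary one-dimensional Poincar\'e inequality for functions vanishing at one endpoint yields, cross-section by cross-section,
\[
\int_{H_1(x_1)}^{H_0(x_1)}|w(x_1,x_2)|^2\,dx_2\ \le\ H(x_1)^2\int_{H_1(x_1)}^{H_0(x_1)}|\partial_{x_2}w(x_1,x_2)|^2\,dx_2
\]
for every $w\in H^1_0(\ballone,\boundzero)$; note that this uses only the Dirichlet condition on $\boundzero$, which is exactly the one available here.

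Next I would introduce the weight $\Phi(x_1)=\gamma\,\psi(x_1)$, where $\psi(x_1)=\int_{|x_1|}^{d_0}H(t)^{-1}\,dt$ for $|x_1|\le d_0$ and $\psi\equiv0$ for $|x_1|\ge d_0$; then $|\grad\Phi(x)|\le \gamma/H(x_1)$ on $\{|x_1|<d_0\}$, while $\psi(x_1)\ge c_\ast^{-1}(|x_1|^{-1}-d_0^{-1})\to+\infty$ as $x_1\to0$. As in the proof of Theorem~\ref{kissthm}, I would work with a bounded Lipschitz truncation $\Phi_\delta$ of $\Phi$ (cap $\Phi$ at its value on $\{|x_1|=\delta\}$, then let $\delta\to0$), so that $\varphi=e^{2\Phi_\delta}u\in H^1_0(\ballone,\boundzero)$ is an admissible test function in the identity \eqref{varide} (with $g=0$); no boundary term on $\boundone$ ever appears, since the Neumann condition is already built into \eqref{varide}. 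Inserting this $\varphi$ and completing the square in the standard Agmon fashion gives
\[
\|\grad(e^{\Phi_\delta}u)\|_{L^2(\ballone)}^2\ =\ \int_{\ballone}\bigl(\lambda+|\grad\Phi_\delta|^2\bigr)\,e^{2\Phi_\delta}|u|^2\,dx .
\]
Writing $W=e^{\Phi_\delta}u$ and splitting the integral: on $\ballone\setminus\{|x_1|<d_0\}$ the weight and its gradient are bounded, so that contribution is $\le C\|u\|_{L^2(\ballone)}^2=:C_1$ uniformly in $\delta$; on $\{|x_1|<d_0\}$, using $|\grad\Phi_\delta|^2\le\gamma^2H(x_1)^{-2}$ and then the transverse Poincar\'e inequality above one bounds the contribution by $\int_{|x_1|<d_0}\bigl(\lambda H(x_1)^2+\gamma^2\bigr)\int_{\mathrm{cs}(x_1)}|\grad W|^2$, and since $\lambda H(x_1)^2+\gamma^2\le\lambda C_\ast^2 d_0^4+\gamma^2\le\tfrac12$ once $d_0$ and $\gamma$ are small, this is $\le\tfrac12\|\grad W\|_{L^2(\ballone)}^2$. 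Absorbing, $\|\grad(e^{\Phi_\delta}u)\|_{L^2(\ballone)}^2\le 2C_1$ uniformly in $\delta$; letting $\delta\to0$ (monotone convergence plus weak lower semicontinuity, and the transverse Poincar\'e inequality once more for the $L^2$ part) gives $e^{\Phi}u\in H^1(\ballone)$.

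Finally, this weighted bound yields the decay: on $\{|x_1|<r\}\cap\ballone$ one has $\Phi(x_1)\ge\gamma\,\psi(r)\ge c\gamma/r$, hence $\|u\|_{L^2(\ballone\cap\{|x_1|<r\})}\le e^{-c\gamma/r}\,\|e^{\Phi}u\|_{L^2(\ballone)}$, i.e. exponential decay in $1/\rho$ as $x\to\cusp$; the pointwise $L^\infty$ (and weighted $H^2$) statement then follows from rescaled interior and boundary elliptic estimates on the dyadic pieces $\{2^{-k-1}\le|x_1|\le 2^{-k}\}$, exactly as invoked after Theorem~\ref{kissthm}. The crux — and the reason the decay is genuinely exponential, unlike the merely polynomial statement of Theorem~\ref{kissthm} — is the cancellation in the absorption step: the transverse Poincar\'e factor $H(x_1)^2$ exactly kills the singular weight-gradient term $|\grad\Phi|^2\sim\gamma^2H(x_1)^{-2}$, leaving only the constant $\gamma^2$, so one needs only to take $\gamma$ and $d_0$ small; checking that $e^{2\Phi_\delta}u$ remains an admissible test function and that the $\delta$-limit can be passed is then routine bookkeeping.
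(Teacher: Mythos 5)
Your proposal is correct and follows essentially the same route as the paper: an exponential weight behaving like $e^{\beta/|x_1|}$ near $\cusp$ (your $e^{\gamma\int_{|x_1|}^{d_0}H(t)^{-1}dt}$ has exactly this asymptotics), a bounded truncation $T_\delta$, the test function $T_\delta^2 u$ in \eqref{varide} yielding the identity $\|\grad(T_\delta u)\|^2=\int(\lambda+|T_\delta^{-1}\grad T_\delta|^2)|T_\delta u|^2$, and absorption of the singular term via the transverse Poincar\'e inequality on cross-sections, which works precisely because the $H(x_1)^2$ gain cancels the $|\grad\Phi|^2\sim\gamma^2H(x_1)^{-2}$ loss provided the exponent is small (the paper's condition $\beta^2<c$). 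Your bookkeeping is marginally cleaner --- you obtain a $\delta$-uniform bound on $\|\grad(e^{\Phi_\delta}u)\|_{L^2(\ballone)}$ and hence $e^{\Phi}u\in H^1(\ballone)$, whereas the paper isolates $(c-\beta^2)\||x_1|^{-2}T_\delta u\|^2\leq\lambda e^{2\beta/\delta}\|u\|^2$ and compensates with the prefactor $e^{-2\beta/\delta}$ --- but the argument is the same in substance.
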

   \begin{proof}
   Let $T_{\delta}$ be the weight function defined by
        \begin{equation*}
        T_{\delta}(x) =
        \begin{cases}
        e^{\frac{\beta}{\delta}}, & \qquad |x_1|\leq \delta,\\
        e^{\frac{\beta}{|x_1|}}, & \qquad \delta<|x_1|\leq R,\\
        e^{\frac{\beta}{R}}, & \qquad |x_1|>R.
        \end{cases} 
        \end{equation*}
   Here, the parameter $\delta$ is small, positive and it will be sent to $0$ and $\beta>0$. Note that $T_{\delta}$ is a continuous function such that 
           \begin{equation*}
           \label{weightfu}
           |\grad T_{\delta}(x)|\leq \beta|x_1|^{-2}T_{\delta}(x), \qquad e^{\frac{\beta}{R}}\leq T_{\delta}(x)\leq e^{\frac{\beta}{\delta}}.
           \end{equation*} 
   We insert into the integral identity \eqref{varide} the test function $v=T_{\delta}U\in H^1_0(\ballone, \boundzero)$, with $U=T_{\delta}u$, obtaining
        \begin{align*}
        \lambda(u, v)_{\ballone} &= 
         (\grad u, \grad v)_{\ballone} 
        = (\grad u, U\grad T_{\delta})_{\ballone} + (\grad u, T_{\delta} \grad U)_{\ballone} \\
        &=(T_{\delta}\grad u, T_{\delta}^{-1}U\grad T_{\delta})_{\ballone} + (T_{\delta}\grad u,  \grad U)_{\ballone}\\
        &= (\grad U, T_{\delta}^{-1}U\grad T_{\delta})_{\ballone} - (u\grad T_{\delta}, T_{\delta}^{-1}U\grad T_{\delta})_{\ballone}\\ &\quad+(\grad U,  \grad U)_{\ballone} - (u\grad T_{\delta},  \grad U)_{\ballone} \\
        &=(\grad U,  \grad U)_{\ballone} - (T_{\delta}^{-1}U\grad T_{\delta}, T_{\delta}^{-1}U\grad T_{\delta})_{\ballone}.
        \end{align*}
    Hence, 
        \begin{equation*}
        \|\grad U\|^2_{L^2(\ballone)} = \lambda\|U\|^2_{L^2(\ballone)} +\|T_{\delta}^{-1}U\grad T_{\delta}\|^2_{L^2(\ballone)}.
        \end{equation*}
  Taking into account the Poincar\`{e}'s inequality \eqref{fried}, we find
        \begin{equation*}
        \label{Dirichcas}
         (c- \beta^2)\||x_1|^{-2}U\|^2_{L^2(\ballone)}\leq\lambda\|U\|^2_{L^2(\ballone)} \leq \lambda e^{\frac{2\beta}{\delta}}\|u\|^2_{L^2(\ballone)} <\infty.
        \end{equation*}
   In particular, choosing $\beta$ such that $0\leq\beta^2< c$, we get
        \begin{equation*}
        e^{-\frac{2\beta}{\delta}}\int_{\ballone}|x_1|^{-4}|U(x)|^2dx \leq c_{\lambda}<\infty.
        \end{equation*}
   It implies that both of the integrals
       \begin{equation*}
       e^{-\frac{2\beta}{\delta}}\int_{\ballone\cap \{x: |x_1|\leq e^{-\frac{\beta}{2\delta}} \} }|x_1|^{-4}|U(x)|^2dx, \qquad e^{-\frac{2\beta}{\delta}}\int_{\ballone\cap \{x: |x_1|> e^{-\frac{\beta}{2\delta}} \} }|x_1|^{-4}|U(x)|^2dx,
       \end{equation*}
   are bounded for all $\delta>0$. The first one gives
       \begin{equation*}
       \int_{\ballone\cap \{x: |x_1|\leq e^{-\frac{\beta}{2\delta}} \} }|U(x)|^2dx \leq e^{-\frac{2\beta}{\delta}}\int_{\ballone\cap \{x: |x_1|\leq e^{-\frac{\beta}{2\delta}} \} }|x_1|^{-4}|U(x)|^2dx <\infty
       \end{equation*}
   for all $\delta>0$. Since $T_\delta$ is monotone increase as $\delta\to 0 $, we conclude that the eigenfunction $u$ has an exponential decay in $L^2$-norm in a neighbourhood of the cusp $\cusp$.
   \end{proof}
The eigenfunctions $u$ are thus smooth at any distance of $\cusp$ and vanish at cusp point $\cusp$ with all their derivatives due to the exponential decay. We conclude that also in this case the asymptotic anz\"{a}tze for $(\eigenv, u^\varepsilon)$ and the procedure given in the Sections $2-3$ are corect.

\subsection{Open Questions} 
Due to the shape of the boundary $\boundone$, the solution of the problem $\eqref{kisspb}-\eqref{kissbc}$ behaves in substantially different way from the solution of the problem 
     \begin{align}
     -\laplace u(x) &=\lambda u(x),\hspace{0.3cm} x\in\Omega_1,\label{laptouchdir}\\
      u (x) = 0,\hspace{0.3cm} x\in\boundone\setminus\cusp,\hspace{0.3cm}&\hspace{0.5cm}
      u(x)  =c_0,\hspace{0.3cm}x\in\boundzero\setminus\cusp.\label{laptouchdir1}
      \end{align}
Here we have simply replaced the Neumann boundary condition \eqref{Neumann} of the problem \eqref{kisspb}-\eqref{kissbc} with a homogeneous Dirichlet condition. Indeed an approximation of the solution $u$ of the problem \eqref{laptouchdir}-\eqref{laptouchdir1} is to be found in such a way that the boundary conditions are satisfied exactly while discrepancies in the equation \eqref{laptouchdir} is reduced as much as possible. As a consequence, a solution $u$ with the asymptotic 
   \begin{equation*}
   u(x) = c_0\frac{x_2-H_1(x_1)}{H(x_1)} +\cdots, \qquad x\to\cusp,
   \end{equation*}
cannot belong to the Sobolev space $H^1(\ballone)$. Indeed the integral 
       \begin{equation*}
       \int_{0}^{1/3}\int_{H_1(x_1)}^{H_0(x_1)}\left|\frac{\partial}{\partial x_2}\left(c_0\frac{x_2-H_1(x_1)}{H(x_1)}\right)\right|^2dx_2dx_1 = 2c_0^2\int_{0}^{1/3}\frac{1}{H(x_1)^2}dx_1
       \end{equation*}
is divergent because the integrand has nonadmissible singularity $O(|x_1|^{-4})$. The derivation of the ansatz for the eigenfunction $u$ of the problem \eqref{laptouchdir}-\eqref{laptouchdir1} is still an open problem.

\bigskip
\noindent
\section*{Acknowledgments}
\medskip
The author S.A. Nazarov has been supported by the grant 18-01-00325 of the Russian Foundation on Basic Research. The author S.A. Nazorv gratefully acknowledge the hospitality of the Department of Mathematical Sciences \textquotedblleft G.L. Lagrange\textquotedblright, Dipartimento di Eccellenza 2018-2022, of Politecnico di Torino, and the support of GNAMPA-INdAM. The authors V. Chiad\`o Piat  and L. D'Elia are members of the Gruppo Nazionale per l'Analisi Matematica, la Probabilit\`a e le loro Applicazioni (GNAMPA) of the Istituto Nazionale di Alta Matematica (INdAM).

\bigskip
\noindent

\bibliographystyle{plain}
\bibliography{kissingdomain}

\end{document}